\newtheoremstyle{plaintheorems}
{10pt}
{6pt}
{}
{}
{\bfseries}
{.}
{.5em}
{\thmname{#1}\thmnumber{ #2}\thmnote{ (#3)}}
\theoremstyle{plaintheorems}
\newtheorem{Rem}{Remark}
\newtheoremstyle{sltheorems}
{10pt}
{6pt}
{\slshape}
{}
{\bfseries}
{.}
{.5em}
{\thmname{#1}\thmnumber{ #2}\thmnote{ (#3)}}
\theoremstyle{sltheorems} 
\newtheorem{Thm}{Theorem}
\newtheorem{CThm}{Classical Theorem}
\newtheorem*{Thm*}{Theorem}
\newtheorem{conj}{Conjecture}
\newtheorem{Defi}{Definition}
\newtheorem{lem}{Lemma}
\newtheorem*{cor*}{Corollary}
\newcommand{\C}{\mathbb{C}}
\newcommand{\N}{\mathbb{N}}
\newcommand{\Z}{\mathbb{Z}}
\newcommand{\cal}{\mathcal}
\newcommand*\kronecker[2]{%
\relax\if@display
\expandafter{(\frac{#1}{#2})}
\else
\expandafter{(#1/#2)}
\fi
}
\let\@@pmod\pmod
\DeclareRobustCommand{\pmod}{\@ifstar\@pmods\@@pmod}
\def\@pmods#1{\mkern4mu({\operator@font mod}\mkern 6mu#1)}
\newcommand{\be}{\begin{equation}}
\newcommand{\ee}{\end{equation}}
\newcommand{\eal}{\ensuremath{\end{align*}}}
\newcommand{\bea}{\begin{eqnarray}}
\newcommand{\eea}{\end{eqnarray}}
\newcommand{\eps}{\ensuremath{\varepsilon}}
\renewcommand{\AA}{\mathcal{A}}
\renewcommand{\le}{\leqslant}
\renewcommand{\ge}{\geqslant}
\renewcommand{\leq}{\leqslant}
\renewcommand{\geq}{\geqslant}
\DeclareMathAlphabet{\curly}{U}{rsfs}{m}{n}
\newcommand{\e}{\mathbf{e}}
\newcommand{\Euler}{{\pmb \gamma}}
\patchcmd{\env@cases}{1.2}{1}{}{}
\begin{document}
\title[Kummer ratio of the relative class number for cyclotomic fields]
{The Kummer ratio of the relative class number\\ for prime cyclotomic fields}

\author[N. Kandhil, A. Languasco, P. Moree, S. Saad Eddin and A. Sedunova]
{Neelam Kandhil, Alessandro Languasco, Pieter Moree, \\ Sumaia Saad Eddin 
and Alisa Sedunova}

\subjclass[2020]{Primary 11R18, 11R29; Secondary 11R47, 11Y60}
\keywords{cyclotomic fields, class number, Kummer conjecture}
\date{}

\begin{abstract}
\noindent 
Kummer's conjecture predicts the asymptotic growth of the relative class number of 
prime cyclotomic fields. 
We substantially improve the known bounds of 
Kummer's ratio under three scenarios: no Siegel zero, presence of Siegel 
zero and assuming the Riemann Hypothesis for the Dirichlet $L$-series attached to 
odd characters only.
The numerical work in this paper extends and improves on our earlier 
preprint \url{https://arxiv.org/abs/1908.01152} 
and demonstrates our theoretical results.
\end{abstract}

\maketitle 
\setcounter{tocdepth}{1}
\mbox{}\vspace{-1cm}
\section{Introduction}
Let $K$ be a number field, ${\cal O}$ its ring of integers and $s$ a complex variable.
For $\Re(s)>1$ the 
\emph{Dedekind zeta function} is defined by
\begin{equation*}
\zeta_K(s)=\sum_{\mathfrak{a}} \frac{1}{N{\mathfrak{a}}^{s}}
=\prod_{\mathfrak{p}}\frac{1}{1-N{\mathfrak{p}}^{-s}},
\end{equation*}
where $\mathfrak{a}$ ranges over 
the non-zero ideals in ${\cal O}$, 
$\mathfrak{p}$ ranges over the prime ideals in ${\cal O}$, and $N{\mathfrak{a}}$ denotes the 
\emph{absolute norm}
of $\mathfrak{a}$,
that is the index of $\mathfrak{a}$ in $\cal O$.
It is known that $\zeta_K(s)$ can be analytically continued to $\C \setminus \{1\}$,
and that it has a simple pole at $s=1$. Notice that $\zeta_{\mathbb Q}(s)$ equals $\zeta(s)$, the \emph{Riemann zeta function}.

Let  $q\ge 3$ be a prime number and $K=\mathbb Q(\zeta_q)$ 
a \emph{prime cyclotomic field}. Denote as
${\mathbb Q}(\zeta_q)^+:={\mathbb Q}(\zeta_q+\zeta_q^{-1})$  the {\it maximal real cyclotomic field}.
We have the factorization
\begin{equation}
\label{oud}
\zeta_{\mathbb Q(\zeta_q)}(s)=\zeta(s)\prod_{\chi\ne \chi_0}L(s,\chi),
\end{equation}
where $\chi$ runs over the non-principal characters modulo $q$.
Likewise we have 
\begin{equation}
\label{maxreal}
\zeta_{{\mathbb Q}(\zeta_q)^+}(s)=\zeta(s)\prod_{\substack{\chi \neq \chi_0 \\ \chi(-1)=1}} L(s,\chi),
\end{equation}
where the product is over all even characters modulo $q$.

\subsection{Kummer's conjecture} 
Let $h_1(q)$ be the ratio of the class number $h(q)$ of 
$\mathbb Q(\zeta_q)$ and the class number 
of its maximal real subfield ${\mathbb Q}(\zeta_q)^+$. 
Kummer proved that this is an integer. 
It is now called either the
\emph{relative class number}, or the \emph{first factor of the class number}, 
and played an important role in Kummer's research on Fermat's Last Theorem. Indeed, it is not difficult to 
show that if $q\nmid h(q)$, then $x^q+y^q=z^q$ has no non-trivial solution with $q$ coprime to $xyz$ \cite[Ch.\,1]{Wbook}. 
Kummer showed that $q$ divides $h(q)$ if and only if $q$ divides $h_1(q)$. As there is no easy way to compute $h(q)$, 
this is a very important result.
\par Some authors related $h_1(q)$ to a determinant and tried to estimate it in this way (cf.\,\cite[\S 2]{FungGW1992}).
Most famous is here the connection with the Maillet determinant
due to Carlitz and Olsen \cite{CO} (independently obtained by Chowla and Weil, 
who however did not publish their result). 
For any integer $n$ co-prime to $q$, let $n'$ be the smallest positive integer such that $nn' \equiv 1\pmod*{q}$ and let $A(n,q)$ 
be the smallest positive residue of $n$ modulo $q$. Let $M_q=(A(mn',q))_{1\leq m,n \leq (q-1)/2}$.
Then $\det(M_q)$ is called Maillet’s determinant. In 1955, Carlitz and Olson proved that $ \det(M_q) = \pm q^{(q-3)/2} h_1(q)$.
From this Carlitz \cite{Carlitz61} deduced the  appealing 
bounds $h_1(q) \le (\frac{q-5}{4})!$ when $q \equiv 1 \pmod*{4}$, and 
$h_1(q) \le (\frac{q-7}{4})! (\frac{q-3}{4})^{\frac{1}{2}}$ when $q \equiv 3 \pmod*{4}$. 
Much more recently Guo \cite{Guo} proved that
suitable normalizations ofthe determinants of $(\cot(jk\pi/q))_{j,k}$ 
and $ (\tan(jk\pi/q))_{j,k}$ for $1\le j,k\le (q-1)/2$ have $h_1(q)$ as factor.

\par In 1972, Mets\"ankyl\"a  \cite{Mets72} 
(simpler proof in \cite{Mets74}), established the elegant
bound  $h_1(q)<2q(q/24)^{(q-1)/4}$. 
In 1982, Feng \cite{Feng} showed using a determinantal approach that
\begin{equation*}
    h_1(q)<2q\Big(\frac{q-1}{31.997158}\Big)^{(q-1)/4}.
\end{equation*}
Fung et al.\,\cite{FungGW1992} used 
determinants to exactly compute $h_1(q)$ for $q<3000$, extending earlier computations by others. 
Kummer himself impressively computed by hand up to $q=163$, only making three mistakes. 

\begin{Defi}
Let $q$ be a prime number,
\begin{equation}
\label{Rq-def}
G(q):=2q\Bigl(\frac{q}{ 4\pi^2}\Bigr)^\frac{q-1}{4}, \quad
R(q):=
\frac{h_1(q)}{G(q)} \quad and \quad r(q):=\log R(q).
\end{equation}
\end{Defi}
The ratio $R(q)$ is called \emph{Kummer ratio}.
In 1851, Kummer \cite{Kummer1851} conjectured that $h_1(q)$ 
asymptotically grows in the same way as 
the elementary function $G(q)$.
\begin{conj}
\label{Kconjecture}
As $q$ tends to infinity,
$R(q)$ tends to $1$.
\end{conj}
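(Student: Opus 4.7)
The plan is to reduce Kummer's conjecture to the asymptotic analysis of a single sum of $L$-values via the \emph{analytic class number formula}. Taking residues at $s=1$ in \eqref{oud} and \eqref{maxreal}, and applying the Dirichlet class number formula separately to $\mathbb{Q}(\zeta_q)$ and $\mathbb{Q}(\zeta_q)^+$, the ratio of regulators, discriminants and roots-of-unity collapses (the Hasse unit index being trivial for prime cyclotomic fields) to yield the identity
\begin{equation*}
h_1(q) \;=\; 2q\left(\frac{q}{4\pi^2}\right)^{(q-1)/4}\prod_{\chi(-1)=-1} L(1,\chi) \;=\; G(q)\prod_{\chi(-1)=-1} L(1,\chi),
\end{equation*}
so that by \eqref{Rq-def},
\begin{equation*}
r(q) \;=\; \sum_{\chi(-1)=-1} \log L(1,\chi).
\end{equation*}
The conjecture is thus equivalent to showing that this sum tends to $0$ as $q\to\infty$.

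The next step is to expand each logarithm using the Euler product,
\begin{equation*}
\log L(1,\chi) \;=\; \sum_p \sum_{k\ge 1} \frac{\chi(p)^k}{kp^k},
\end{equation*}
and swap the order of summation. Invoking the orthogonality identity
\begin{equation*}
\sum_{\chi(-1)=-1} \chi(n) \;=\; \begin{cases}(q-1)/2 & \text{if } n\equiv 1 \pmod q,\\ -(q-1)/2 & \text{if } n\equiv -1 \pmod q,\\ 0 & \text{otherwise,}\end{cases}
\end{equation*}
rewrites $r(q)$ as a signed sum over prime powers lying in the two residue classes $\pm 1 \pmod q$. Heuristically, by the prime number theorem for arithmetic progressions these two classes are equidistributed and the contributions from the ranges $p^k > q$ cancel to leading order, while the rare small prime powers with $p^k\equiv \pm 1 \pmod q$ contribute only $O(1/q)$ individually.

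The main obstacle is that the series $\sum_p \chi(p)/p$ converges only conditionally at $s=1$, and its effective rate of cancellation is governed by the location of zeros of $L(s,\chi)$ near the line $\Re(s)=1$. The most damaging scenario is a \emph{Siegel zero} $\beta$ attached to a real odd character, which would force the corresponding $\log L(1,\chi)$ to be large and negative and could destroy the global cancellation. This naturally suggests splitting the analysis into the three regimes announced in the abstract: (i) assuming no Siegel zero, where a standard zero-free region combined with partial summation gives a sharp bound on $r(q)$; (ii) allowing a Siegel zero, where its contribution is isolated and the remainder is controlled by Siegel's ineffective bound $L(1,\chi) \gg_\varepsilon q^{-\varepsilon}$; (iii) assuming the Riemann Hypothesis for the odd Dirichlet $L$-series, where a truncated explicit formula yields the sharpest quantitative estimate. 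A fully unconditional proof that $r(q) = o(1)$ would demand a quantitatively stronger lower bound on $L(1,\chi)$ than is presently available, which is precisely why Kummer's conjecture remains open.
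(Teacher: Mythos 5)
The statement you have set out to prove is a \emph{conjecture}: the paper offers no proof of it, and in fact assembles conditional theorems and numerical evidence indicating that it is \emph{false} (see the Corollary following Granville's theorem in Section \ref{sec:Kummer}: ``Under the above conjectures, Kummer's ratio conjecture \ref{Kconjecture} is false''). So there is no proof in the paper to compare yours against, and your proposal --- as you concede in your final paragraph --- is not a proof either. That said, your opening reduction is exactly the paper's starting point: the identity $R(q)=\prod_{\chi(-1)=-1}L(1,\chi)$ is \eqref{hasse}, and expanding the logarithms via the Euler product and applying orthogonality of the odd characters gives precisely \eqref{loggie}. Up to that point you are on solid ground.

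The genuine gap lies in your heuristic that ``the rare small prime powers with $p^k\equiv\pm1\pmod*{q}$ contribute only $O(1/q)$ individually.'' Each such term is indeed $O(1/q)$ as a term of $f_q$, but $r(q)=\frac{q-1}{2}f_q$, so a single prime $p\equiv\pm1\pmod*{q}$ of size comparable to $q$ contributes a \emph{constant} to $r(q)$. Concretely, if $2q+1$ is prime, the term $p=2q+1\equiv1\pmod*{q}$ alone contributes $\frac{q-1}{2}\cdot\frac{1}{2q+1}\approx\frac14$ to $r(q)$, and nothing forces a compensating prime of comparable size in the class $-1\pmod*{q}$. If there are infinitely many Sophie Germain primes (Conjecture \ref{HLsophie}), this obstruction recurs infinitely often, and Granville's theorems quoted in Section \ref{sec:Kummer} show that under the Hardy--Littlewood and Elliott--Halberstam conjectures every value $e^{\pm\mu(\mathcal A)/2}$, for $\mathcal A$ admissible, is a limit point of $R(q)$, so that the set of limit points is all of $[0,\infty]$ rather than $\{1\}$. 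The spikes at $r(q)\approx\pm\frac14$ and $\pm\frac18$ in the histograms of Section \ref{sec:numerical} are exactly this phenomenon in the data. The true obstruction is therefore not, as you suggest, merely the absence of a strong enough lower bound for $L(1,\chi)$ --- Siegel zeros are a separate, second difficulty, which the paper isolates in Part \ref{Siegelzero} of Theorem \ref{rq-direct} --- but that equidistribution of primes in the classes $\pm1\pmod*{q}$, even in its strongest conjectural form, cannot suppress the contribution of the first few primes in those classes. No refinement of your outline can establish the statement, because the statement is expected to be false.
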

For a generic prime $q$ the
Kummer ratio $R(q)$ is close to $1$ (see 
Sections \ref{sec:Kummer} and \ref{sec:numerical} for a theoretical, respectively numerical, underpinning).
However, in this paper our focus is on the extremal behavior of $R(q)$.
Our starting point in studying
$R(q)$ will be the identity
\begin{equation}
\label{hasse}
R(q)=\prod_{\chi(-1)=-1}L(1,\chi),
\end{equation}
where the product is over all the odd characters modulo $q$ (cf.\,Hasse \cite{Hasse}).
It follows from this, \eqref{oud} and \eqref{maxreal} that
\[
R(q)=\lim_{s \downarrow 1}\frac{\zeta_{\mathbb Q(\zeta_q)}(s)}
{\zeta_{{\mathbb Q}(\zeta_q)^+}(s)} = \lim_{s \downarrow 1} 
\prod_{\chi(-1)= -1}
L(s,\chi),
\] 
where $s \downarrow 1$ means that $s>1$ tends to $1$.
The reason why only the odd Dirichlet characters 
are involved into \eqref{hasse} follows from the fact
that, using \eqref{oud}-\eqref{maxreal}, in the ratio
$\frac{\zeta_{\mathbb Q(\zeta_q)}(s)}
{\zeta_{{\mathbb Q}(\zeta_q)^+}(s)}$
the Riemann zeta and the even characters contributions cancel out. We refer the interested reader to \cite{Full-K-ratio} to explore the bounds of the product $\prod_{\chi \ne \chi_0} L(1,\chi)$, where $\chi$ varies over all the non-principal characters modulo $q$.

Masley and Montgomery \cite[Thm.\,1]{MasMon} 
gave an \emph{effective} bound for $R(q)$, which in 
combination with numerical work, allowed them to  
prove Kummer's conjecture that $h(q)=1$ if
and only if $q\le 19$.
Using their method 
ineffective, but rather sharper, estimates for $R(q)$ were obtained by 
Puchta \cite{Puchta}\footnote{In Theorem 1 of \cite{Puchta} one should
read $(p+3)/4$ instead of $(p-3)/4$.} and more recently by 
Lu-Zhang  \cite{LuZhang} and Debaene \cite{deb}.

Our main result, Theorem \ref{rq-direct}, 
improves on all of these (see also Section \ref{sec:thm1-comments}
for more details).
It involves the \emph{exponential
integral function} (defined as $E_{1}(x):= \int_{x}^{\infty} e^{-t}\,dt/t$ for $x>0$), 
and the \emph{Siegel zero}
(defined in Section \ref{sec:Siegel-zero}).
\begin{Thm}
\label{rq-direct}
Let $\ell(q)$ be a function that tends arbitrarily slow and monotonically 
to infinity as $q$ tends to infinity.
There is an effectively computable prime
$q_0$ (possibly depending on $\ell$) and an effectively 
computable prime $q_1$  such that 
the following statements are true:
\begin{enumerate}[label={\arabic*)}, wide, nosep, itemindent= 4pt, after=\vspace{-12pt}]
\item
\label{nosiegelzero}
If for some $q\ge q_0$ the family 
of Dirichlet $L$-series $L(s, \chi)$, with $\chi$ any odd character modulo $q$, has no Siegel zero, 
for example if $q \equiv 1 \pmod*{4}$,
then
$$
\max\{R(q),R(q)^{-1}\}  < 
e^{0.41}
\, (\log q) \,\ell(q).
$$
\item
\label{Siegelzero} 
If for some $q\ge q_0$ the family 
of Dirichlet $L$-series $L(s, \chi)$, with $\chi$ any odd character modulo $q$, has a Siegel zero $\beta_0$ then
$$
\max\{R(q)e^{E_1(1- \beta_0)},R(q)^{-1}e^{-E_1(1- \beta_0)}\} 
< e^{0.41}
\, (\log q)^2 \, \ell(q).
$$
\item
\label{under-rh-odd}
If the Riemann Hypothesis holds 
for every Dirichlet $L$-series $L(s,\chi)$, with $\chi$ an odd character modulo $q$ for 
some prime $q\ge q_1$, then
$$
\max\{R(q),R(q)^{-1}\} <  
e^{0.41}
\log q.$$
\end{enumerate}
\end{Thm}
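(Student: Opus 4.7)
The plan is to take logarithms in Hasse's identity \eqref{hasse}, writing
$$
r(q) = \sum_{\chi(-1)=-1} \log L(1,\chi),
$$
and to estimate each summand through an explicit-formula identity that decomposes $\log L(1,\chi)$ into a short sum over prime powers plus a contribution from the non-trivial zeros of $L(s,\chi)$. The principal tool is a Mellin/Perron-type representation: for a smooth compactly supported weight $w$ and a parameter $X = X(q)$, one obtains
$$
\log L(1,\chi) \;=\; \sum_{n \le X} \frac{\Lambda(n)\chi(n)}{n \log n}\, w\!\left(\tfrac{\log n}{\log X}\right) \;-\; \sum_{\rho} \Phi(\rho, X) \;+\; \text{error}(X),
$$
where $\rho$ ranges over the non-trivial zeros of $L(s,\chi)$ and $\Phi(\rho, X)$ is a transform that behaves like $E_1((1-\rho)\log X)$. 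This is the source of the $E_1(1-\beta_0)$ term in case \ref{Siegelzero}: when $\chi$ is the real character with a Siegel zero $\beta_0$, the single zero contribution $\Phi(\beta_0, X)$ can be extracted as the main non-elementary term, leaving the rest of the zero sum small.

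Next, I would sum over odd characters. For the prime-power block, character orthogonality collapses $\sum_{\chi(-1)=-1} \chi(n)$ to a combination of indicators of $n \equiv \pm 1 \pmod{q}$, weighted by $(q-1)/2$; since $n \le X$ and $X$ will be chosen polynomial in $\log q$, only $n = 1$ survives for any reasonable choice, reducing the prime contribution to a controllable error. The surviving prime-power tails, together with the Mellin error, are absorbed into the arbitrarily slowly growing $\ell(q)$ factor. For the zero block, one needs uniform control on $\sum_{\chi(-1)=-1} \sum_{\rho} |\Phi(\rho,X)|$. In case \ref{nosiegelzero}, the classical Page--Landau zero-free region for odd characters (which in particular contains no Siegel zero, automatic when $q \equiv 1 \pmod{4}$) combined with zero density estimates of Kadiri--Bellaiche--Heath-Brown type yields a bound of size $(1+o(1))\log q$; in case \ref{Siegelzero} the same analysis with $\beta_0$ singled out loses an extra $\log q$; in case \ref{under-rh-odd} all zeros sit on $\Re s = 1/2$, and the transform decays rapidly enough that the double sum is again $O(\log q)$ with good implied constant.

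Since the two-sided bound $\max\{R(q), R(q)^{-1}\}$ is required, I would track both upper and lower estimates of $r(q)$ by using the explicit formula in the form giving $|\log L(1,\chi)|$, or equivalently, applying the argument to both $\log L(1,\chi)$ and $-\log L(1,\chi) = \log L(1,\overline{\chi})^{-1}$ and noting the symmetry of odd character pairing. The final numerical constant $e^{0.41}$ will then emerge from optimizing the parameter $X$, the choice of the smoothing weight $w$, and plugging in explicit constants from the zero-density estimate.

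The main obstacle will be \textbf{bookkeeping of the numerical constants} at every step in order to reach $e^{0.41}$, rather than some qualitatively similar but weaker constant. Each application of the explicit formula, zero-density bound, and Brun--Titchmarsh or Polya--Vinogradov-type bounds on auxiliary character sums introduces constants that must be made effective and tightened simultaneously. In particular, correctly coupling the zero-density constant with the decay rate of the Mellin transform $\Phi$ on the critical line (case \ref{under-rh-odd}) or on a Kadiri-type contour (cases \ref{nosiegelzero}, \ref{Siegelzero}) is where any slack becomes visible in the final constant, and this is where I expect the most delicate work to lie.
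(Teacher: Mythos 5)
Your overall architecture is inverted relative to what actually makes the theorem work, and the inversion is fatal at one specific point: the choice of $X$ polynomial in $\log q$ together with the claim that, after orthogonality, ``only $n=1$ survives'' and the prime block is a controllable error. In the paper's proof the prime block is the \emph{main term}, not an error. Orthogonality turns $\sum_{\chi(-1)=-1}\chi(p)$ into $\tfrac{q-1}{2}$ times the difference of indicators of $p\equiv\pm1\pmod*{q}$, and the resulting sums $S_q(\pm1,x_1)$ over primes $p\equiv\pm1\pmod*{q}$ with $q<p\le x_1=q^{\ell(q)}$ are what produce, via partial summation and Brun--Titchmarsh as in \eqref{large-primes}--\eqref{127}, the quantity $\log_2 q+\log\ell(q)+C_1$ --- i.e.\ the factor $(\log q)\,\ell(q)$ and, after optimizing the cutoff $k=55$ and adding the prime-power contribution of Lemma \ref{expl-new} (which your outline omits entirely), the constant $e^{0.41}$. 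If you truncate at $X=(\log q)^{O(1)}<q$ there are no primes $\equiv\pm1\pmod*{q}$ left at all, so the entire weight of $r(q)$ is pushed onto the zero sum $\sum_{\chi(-1)=-1}\sum_\rho\Phi(\rho,X)$. That sum involves $\gg qT\log(qT)$ zeros, and with $\log X\asymp\log_2 q$ the transform $\Phi(\rho,X)\approx E_1((1-\rho)\log X)$ has essentially no decay near the $1$-line; no zero-density estimate of Kadiri--Heath-Brown type (nor even RH) bounds this double sum by $\log_2 q+O(1)$. Unconditionally bounding it at that precision would amount to proving the theorem by an entirely different and much stronger mechanism than exists. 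The paper instead takes $x_1=q^{\ell(q)}$ (resp.\ $q^2(\log q)^3$ under RH$_{\textrm{odd}}$), precisely so that the zero contribution for $p>x_1$ is only $O(1/\ell(q))$ (resp.\ $o(1)$) via the Lu--Zhang lemmas, see \eqref{123} and \eqref{S2-rh-estim}; the $\sqrt{x}$ saving needs $x\ge q^2$ to beat the $\tfrac{q-1}{2}$ characters.

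Two further points. First, your stated bound ``$(1+o(1))\log q$'' for the zero block in case \ref{nosiegelzero} would give $R(q)\le q^{1+o(1)}$, exponentially weaker than $e^{0.41}(\log q)\ell(q)$; the target is $|r(q)|\le\log_2 q+\log\ell(q)+0.41$, so every block must be controlled at the additive $\log\log q$ scale. Second, the constant $0.41$ does not come from optimizing a smoothing weight or a zero-density constant: it is $C_1+\bigl(\tfrac{43}{26}-\tfrac{9}{13}\zeta(3)\bigr)<0.41$, where $C_1=\tfrac14 H_{27}-\log_2 55<-0.415$ arises from minimizing over the Brun--Titchmarsh cutoff $kq$ in \eqref{small-primes}--\eqref{127}, and the second piece is the explicit Ankeny--Chowla bound on the $m\ge2$ prime powers (Lemma \ref{expl-new}). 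Your framework contains neither ingredient. The two-sided bound, finally, is handled in the paper not by symmetrizing the explicit formula but by the positivity trick \eqref{bounds-sigma1-trunc}--\eqref{main-thm1}: both $S_q(1,x)$ and $S_q(-1,x)$ are nonnegative, so $|\Sigma_1|$ is at most the larger of the two one-sided Brun--Titchmarsh bounds. The $E_1(1-\beta_0)$ term and the extra $\log q$ in case \ref{Siegelzero} do appear roughly as you describe (the Siegel zero contributes $\int_{x_1}^{x_2}u^{\beta_0-2}\,du/\log u$, whose truncation at $x_1$ costs an extra $\log_2 x_1$), but only within the long-range regime $x_1=q^{\ell(q)}$, $x_2=e^q$, not in your short-$X$ setup.
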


Notice that
$\max\{R(q),R(q)^{-1}\}=e^{|r(q)|}.$
Several other comments are in order. Here we
state the most relevant ones, and refer to
Section \ref{sec:thm1-comments} for further ones.
\begin{Rem}
Note that when compared with the no Siegel zero situation, assuming 
the Riemann Hypothesis for the Dirichlet $L$-series attached to odd
characters only allows one to remove a factor 
that tends to infinity arbitrarily slowly.
\end{Rem}

\begin{Rem}\label{rmk41}
The value  $0.41$ in 
Theorem \ref{rq-direct} can be further sharpened to $0.39$
by arguing as in Remark \ref{sharp-constant} below.
\end{Rem}
\begin{Rem} 
\label{E1bound}
We have
$1 \ll E_1(1- \beta_0) < \eps \log q + c(\eps)$,
where  $c(\eps)$ is ineffective
(see Section \ref{sec:thm1-comments} for a proof). 
\end{Rem}

The reader might wonder how close
Theorem \ref{rq-direct} is to the truth.
Sharp estimates were conjectured by
Granville \cite[\S~9]{Gr}, who speculated
that for $\epsilon>0$ and $q$ large enough, we have
\begin{equation*}
\max\{R(q),R(q)^{-1}\}  <
(\log_2 q)^{\frac{1}{2}+\epsilon},
\end{equation*}
with this result being false if $\frac12$ is being replaced 
by any smaller number (where here and in the sequel $\log_2 q$ denotes $\log\log q$).
The optimum is related to strong failure of prime number equidistribution, an extremely rare situation that is very far from being understood.

The paper is organised as follows:
In Section \ref{sec:prelim} 
we recall results we need (mainly from prime number theory)
and in Section \ref{sec:Kummer} we present
the Kummer ratio conjecture and we prove
an explicit constant version of a classical result
by Ankeny and Chowla (Lemma \ref{expl-new}).
In Section \ref{sec:proof-thm-rq-direct}
we prove Theorem \ref{rq-direct}.
Many comments and remarks about comparing Theorem \ref{rq-direct}
with similar results in the literature are collected in Section \ref{sec:thm1-comments}.
Section \ref{sec:numerical} 
expands on our earlier preprint \cite{earlierwork}.
It provides an efficient algorithm to compute $R(q)$
and some numerical data and graphical representations
regarding the distribution of $r(q)$ that might be the starting
point for future works.
For example, the presence of 
secondary ``spikes'' close to $\pm\frac{1}{4}$ and $\pm\frac{1}{8}$ demonstrates in a
beautiful way the contributions of the primes $q$ for which $2q\pm1$, or $4q\pm1$,
are prime too.

\section{Preliminaries}
\label{sec:prelim}
\subsection{Prime number distribution}
In this section, we recall the material we need on the distribution of prime numbers, using the notations
\begin{equation*}
\pi(t) = \sum_{p \le t} 1, \quad \quad
\pi(t;d,a) = \sum_{\substack{p \le t \\ p \equiv a  \pmod*{d}}} 1,
\end{equation*}
\begin{equation*}
\theta(t) = \sum_{p \le t} \log p, \quad \quad
\theta(t;d,a) = \sum_{\substack{p \le t \\ p \equiv a  \pmod*{d}}} \log p,
\end{equation*}
and
\begin{equation*}
\psi(t) = \sum_{n \le t} \Lambda(n), \quad \quad
\psi(t;d,a) = \sum_{\substack{n \le t \\ n \equiv a  \pmod*{d}}} \Lambda(n),   
\end{equation*}
where $\Lambda$ denotes the von Mangoldt function.
For fixed coprime 
integers $a$ and $d$, we have asymptotic equidistribution: 
$$
\pi(t;d,a)\sim \frac{\pi(t)}{\varphi(d)},
\quad
\theta(t;d,a)\sim \frac{\theta(t)}{\varphi(d)}
\quad
\textrm{and}
\quad
\psi(t;d,a)\sim \frac{\psi(t)}{\varphi(d)}
\quad (t\to \infty),
$$
with $\varphi$ Euler's totient.
While the asymptotics for the quantities above is available only for small $d$, 
say, $d \le (\log t)^{A}$, $A>0$, the following celebrated result 
concerns the accuracy of the first approximation on average when 
the moduli $d$ are allowed to be large with respect to $t$.
For every  $A > 0$, we have
the bound
\begin{equation} 
\label{BV-type-ineq}
\sum_{d \le {\mathcal Q}}
\max_{t \le u} 
\max_{(a,d)=1}
\Bigl\vert 
\psi(t;d,a)-\frac{\psi(t)}{\varphi(d)}
\Bigr \vert 
\ll 
\frac{u}{(\log u)^{A}},
\end{equation}
where ${\mathcal Q}={\mathcal Q}(u)$ is a suitable function,
and the implicit constant may depend on $A$ and ${\mathcal Q}$.
The upper bound \eqref{BV-type-ineq},  with 
${\mathcal Q}(u) = \sqrt{u}/(\log u)^B$, $B = B(A)>0$,
was independently proved by Bombieri 
and A. I. Vinogradov in 1965, see
\cite[\S~9.2] {CojocaruM2006}.

A similar statement for 
$t=u$ and ${\mathcal Q}(u) = u^{1-\eps}$ with $0<\eps<1$,
is unproved yet and commonly called
the \emph{Elliott-Halberstam conjecture}, see 
\cite{CojocaruM2006,ElliottH1968/69}. 

\begin{conj}
[Elliott-Halberstam]
\label{EHconjecture}
For every $\eps>0$ and $A>0$,
\[
\sum_{q\le u^{1-\eps}} \max_{(a,q)=1}
\Bigl|\pi (u;q,a)-{\frac {\pi (u)}{\varphi (q)}}\Bigr| 
\ll_{A,\eps} 
\frac{u}{(\log u)^A}.
\]
\end{conj}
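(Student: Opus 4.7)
The statement above is the Elliott--Halberstam conjecture from 1968, which remains open. Accordingly, I can only outline the natural line of attack and identify the precise point at which all known techniques break down.

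First I would apply character orthogonality to write
\[
\pi(u;q,a) - \frac{\pi(u)}{\varphi(q)} = \frac{1}{\varphi(q)} \sum_{\chi \ne \chi_0} \overline{\chi}(a)\,\pi(u,\chi),
\]
with $\pi(u,\chi) = \sum_{p \le u} \chi(p)$, then reduce to primitive characters of conductors $d\mid q$ and take the maximum over $a$ inside the character sum. After this reduction it suffices to bound a sum over primitive characters $\chi^*$ modulo $d\le \mathcal{Q}$ of $|\psi(u,\chi^*)|$, weighted appropriately. I would then split $\Lambda(n)$ via the Heath-Brown (or Vaughan) identity into Type I and Type II bilinear pieces, apply the multiplicative large sieve in its Gallagher form to the Type II pieces, and estimate Type I pieces by the Siegel--Walfisz theorem together with a trivial bound on the smooth variable. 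This is precisely the Bombieri--Vinogradov strategy recalled in \eqref{BV-type-ineq}, and it caps out at $\mathcal{Q} = \sqrt{u}/(\log u)^B$ because the large sieve inequality loses a factor of size $\mathcal{Q}^2 + u$ that must be absorbed by Cauchy--Schwarz.

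The main obstacle is breaking through the $\sqrt{u}$ barrier. To reach $\mathcal{Q} = u^{1-\eps}$ one would need to establish, uniformly in both $q$ and the residue class $a$, power-saving cancellation in bilinear forms involving inverses modulo $q$ for $q$ much larger than $\sqrt{u}$; the large sieve alone is essentially sharp and cannot deliver this. The deepest work in this direction is the dispersion method of Linnik refined by Bombieri, Friedlander and Iwaniec, later extended by Zhang and Polymath~8, which reaches $\mathcal{Q} = u^{1/2+\delta}$ for some small $\delta>0$, but only after fixing the residue class $a$ and restricting to moduli with additional multiplicative structure (smooth, or well-factorable). Removing both restrictions and pushing $\delta$ all the way up to $1/2 - \eps$ lies well beyond the current strength of the Deshouillers--Iwaniec bounds for sums of Kloosterman sums coming from the spectral theory of automorphic forms; a genuinely new analytic input would be required, which is why the conjecture has resisted proof for more than half a century.
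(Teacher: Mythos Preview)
Your assessment is correct: the statement is the Elliott--Halberstam conjecture, and the paper does not attempt a proof either. It is recorded there purely as Conjecture~\ref{EHconjecture}, to be invoked as a hypothesis in the results of Granville and of Murty--Petridis quoted in Section~\ref{sec:Kummer}. So there is no ``paper's own proof'' to compare against; your outline of the Bombieri--Vinogradov machinery and of the $\sqrt{u}$ barrier is an accurate and useful summary of why the conjecture remains open, and nothing more was expected here.
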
 
Statements equivalent to \eqref{BV-type-ineq} and the
Elliott-Halberstam conjecture with the $\psi(t;q,a), \psi(t)$-functions 
replaced by the $\pi(t;q,a), \pi(t)$-functions, or the $\theta(t;q,a), \theta(t)$-ones, 
can be easily obtained via partial summation. 

An important tool we will use is the following theorem. 
\begin{CThm}[Brun-Titchmarsh]
\label{BT-thm}
Let $x,y>0$ and $a,q$ be positive integers such that $(a,q)=1$.
Then
\begin{equation}
\label{BT-estim}
\pi(x+y;q,a) - \pi(x;q,a) < \frac{2y}{\varphi(q) \log(y/q)},
\end{equation}
for all $y >q$.
\end{CThm}
For a proof, see, e.g., Montgomery-Vaughan \cite[Theorem~2]{MVsieve}.

\subsection{Siegel zeros}
\label{sec:Siegel-zero}
Let $K \ne \mathbb{Q}$ be an algebraic number field having $d_K$ as its absolute discriminant over the rational numbers. Then, see
Stark \cite[Lemma~3]{stark}, $\zeta_K(s)$ has at most one zero in the region in the complex plane determined by 
$$ 
\Re(s) 
\geq
1 - \frac{1}{4 \log d_K},
\quad \quad
|\Im(s)| \leq \frac{1}{4 \log d_K}.
$$
If such a zero exists, it is
real, simple and often called 
\emph{Siegel zero}. 
When $K = \mathbb{Q}(\zeta_q)$,
using \eqref{oud} it is easy to see that the Siegel zero is attached to the family of
Dirichlet $L$-series $\pmod*{q}$. In this case, the Dirichlet character $\chi$
such that $L(s,\chi)$ has the Siegel zero is called the \emph{exceptional character}
and it is a well known fact that it is quadratic.

We will also use the Riemann Hypothesis (RH$_{\textrm{odd}}(q)$) for the Dirichlet $L$-series
attached to odd Dirichlet characters.
\begin{conj}[RH$_{\textrm{odd}}(q)$]
\label{RH-odd-conjecture}
Let $q$ be an odd prime.
The non-trivial zeros of the Dirichlet $L$-series $L(s,\chi)$, 
where $\chi$ runs over the set of the odd Dirichlet characters $\pmod*{q}$, 
are on the line $\Re(s)=\frac{1}{2}$.
\end{conj}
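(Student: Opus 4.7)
The statement labelled Conjecture \ref{RH-odd-conjecture} is RH$_{\textrm{odd}}(q)$, the restriction of the Generalized Riemann Hypothesis to the family of Dirichlet $L$-series attached to the odd characters modulo the prime $q$. Since full GRH is a famously open problem, and RH$_{\textrm{odd}}(q)$ contains the Riemann Hypothesis for every single $L(s,\chi)$ with $\chi$ odd modulo $q$, no proof is available within current analytic number theory. The paper itself introduces this statement as a \emph{conjecture} in order to deduce a sharper bound on $R(q)$ in Theorem \ref{rq-direct}, not as a result to be established. My honest proposal is therefore that it be left as an assumption, exactly as the authors do. For completeness I sketch below the only kinds of plan that have ever been attempted for statements of this strength.

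Any serious attack on RH$_{\textrm{odd}}(q)$ would have to reduce the problem, for each odd primitive character $\chi \pmod{q}$, to a tractable positivity or density criterion. The classical candidates are: (a) a Weil-type explicit formula for $L(s,\chi)$ together with a positivity argument for a carefully chosen test function, forcing any putative zero off the line $\Re(s) = \tfrac{1}{2}$ to contradict the non-negativity of the prime-power side; (b) a Beurling--Nyman style reformulation, where one would have to exhibit a specific indicator-type function in the $L^2$-closure of a space of dilates attached to $\chi$; or (c) zero-density estimates of the form $N(\sigma,T,\chi) \ll T^{A(\sigma)(1-\sigma)}(\log T)^{B}$ pushed all the way down to the critical line. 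One might hope to exploit the special structure of the odd case: the functional equation has root number of modulus one involving $-i\tau(\chi)/\sqrt{q}$, the completed $L$-function includes a $\Gamma\bigl(\tfrac{s+1}{2}\bigr)$ factor, and $L(s,\chi)$ has no trivial zero at $s=0$ (only at the negative odd integers). None of this asymmetry has ever been converted into concrete information about the zero locus.

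The main obstacle is of course precisely the reason GRH is open: there is no mechanism known which forces the zeros of a Dirichlet $L$-function onto the critical line beyond what the functional equation and Euler product already deliver in the form of the classical zero-free region, its Vinogradov--Korobov refinements, and average statements of Bombieri--Vinogradov type such as \eqref{BV-type-ineq}. Even averaging over all odd $\chi \pmod{q}$, the unconditional state of the art allows a positive proportion of zeros to lie arbitrarily close to the line $\Re(s)=1$. Consequently any plan I could outline would, at its first non-trivial step, require a new idea of the same order of difficulty as GRH itself; this is why the authors state RH$_{\textrm{odd}}(q)$ as a hypothesis and use it only to power the conditional part of Theorem \ref{rq-direct}.
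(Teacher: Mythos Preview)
Your assessment is correct and matches the paper exactly: the statement is labelled a \emph{conjecture}, the paper offers no proof of it, and it is invoked only as a hypothesis in Part~\ref{under-rh-odd} of Theorem~\ref{rq-direct}. There is nothing to compare, since neither you nor the authors attempt a proof.
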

 
\subsection{Admissible sets of large measure}
\par Let $\mathcal A=\{a_1,\ldots,a_s\}$ be a set
of $s$ distinct natural numbers.
We define the measure
\[
\mu(\mathcal A)=\sum_{k=1}^s\frac{1}{a_k}.
\]
Given a prime $p$, let 
$\omega(p)$ denote the number of solutions modulo 
$p$ of the equation
\begin{equation}
\label{omegaeq}
X\prod_{i=1}^s(a_iX+1)\equiv 0 \pmod*{p}.
\end{equation}
A set $\mathcal A$ is said to be \emph{admissible} if 
$\omega(p)<p$ for every prime $p$. As $\omega(p)\le s+1$,
we see that $\mathcal A$ is admissible, if and only if 
$\omega(p)<p$ for every prime $p\le s+1$. 
We observe that if we change every term $a_iX+1$ by
$a_iX-1$ in \eqref{omegaeq}, the number of solutions
is also still $\omega(p)$. 
\par The admissible sets
are relevant for determining which sets of linear forms can
(presumably) be infinitely often all simultaneously prime.

\begin{conj}[Hardy-Littlewood \cite{HardyL1923}, lower bound version]
\label{HLconjecture}
Suppose $\AA = \{a_1,\ldots,$ $a_s\}$ is an admissible set.
Choose $b\in \{-1,1\}$.
Then the number of integers $n\le x$ such that the integers 
$n,a_1n+b,\ldots,a_sn+b$ are all prime is of cardinality $\gg_{\mathcal A} x/(\log x)^{s+1}$.
\end{conj}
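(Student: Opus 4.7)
The plan is to follow the classical Hardy--Littlewood prime $(s+1)$-tuples heuristic. First I would introduce the singular series
\[
\mathfrak{S}(\mathcal{A})=\prod_p\Bigl(1-\frac{\omega(p)}{p}\Bigr)\Bigl(1-\frac{1}{p}\Bigr)^{-(s+1)},
\]
where $\omega(p)$ is as in \eqref{omegaeq}. Admissibility gives $\omega(p)<p$ for the small primes, while $\omega(p)=s+1$ for all $p\nmid\prod_i a_i$ with $p>s+1$, so the product converges to a strictly positive real. Treating each of the $s+1$ linear forms probabilistically as prime with density $1/\log x$ and correcting by these local densities then predicts
\[
\#\{n\le x:\ n,a_1n+b,\ldots,a_sn+b\ \text{all prime}\}\sim \mathfrak{S}(\mathcal{A})\,\frac{x}{(\log x)^{s+1}},
\]
which would be stronger than, and in particular would imply, the asserted lower bound $\gg_{\mathcal{A}} x/(\log x)^{s+1}$.

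Next I would try to make this heuristic rigorous by a weighted sieve of GPY--Maynard--Tao type: take nonnegative weights $w(n)$ supported on $n$ for which each coordinate of the tuple has only large prime factors, and attempt to show $\sum_{n\le x}w(n)\mathbf{1}\{n,a_1n+b,\ldots,a_sn+b\ \text{all prime}\}>0$ for $x$ large. A more classical circle-method attack would separate major arcs---which, via Siegel--Walfisz and the Bombieri--Vinogradov bound \eqref{BV-type-ineq}, deliver the singular series---from minor arcs, where one would need power-saving control for the multilinear exponential sum attached to $\Lambda(n)\Lambda(a_1n+b)\cdots\Lambda(a_sn+b)$. A third possible input is the Elliott--Halberstam hypothesis (Conjecture \ref{EHconjecture}), which would push the level of distribution arbitrarily close to $1$ and thereby strengthen any sieve lower bound.

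The hard part will be Selberg's parity barrier: sieve methods of dimension $s+1\ge 2$ cannot by themselves produce lower bounds for configurations consisting entirely of primes, because they fail to distinguish integers with an even from those with an odd number of prime factors. Even assuming the Elliott--Halberstam conjecture one only unlocks Zhang--Maynard-style bounded-gaps conclusions rather than the full tuples statement, and the minor-arc estimates demanded by the circle method are not known either. Breaking parity would require genuinely bilinear information about $\Lambda$ that is beyond current technology, which is precisely why the statement above is recorded as a conjecture rather than a theorem; my ``proof plan'' therefore reduces to a rigorous derivation of the singular-series heuristic and an acknowledgement that the final lower bound is to be invoked, later in the paper, as a working hypothesis.
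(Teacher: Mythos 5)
This statement is recorded in the paper purely as a conjecture, attributed to Hardy--Littlewood, and the paper supplies no proof of it; your write-up correctly recognizes this, and your singular-series heuristic together with the discussion of the parity obstruction is the standard (and accurate) justification for why the statement is believed but unprovable by current methods. Since both you and the paper ultimately leave the statement as an unproven working hypothesis, there is nothing to reconcile.
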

 
\begin{conj}[Hardy-Littlewood \cite{HardyL1923} for Sophie Germain primes]
\label{HLsophie}
There are $ \gg x/(\log x)^2$ primes $ p \leq x$ for which $2p+1$ is also prime.
\end{conj}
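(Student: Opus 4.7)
The plan is to derive Conjecture \ref{HLsophie} directly as the simplest non-trivial instance ($s=1$) of the preceding Conjecture \ref{HLconjecture}. Take $\mathcal{A} = \{2\}$ and $b = 1$: the associated family of linear forms in $n$ is then $\{n,\, 2n+1\}$, and the predicted lower bound on the count of $n \leq x$ for which both forms are simultaneously prime becomes $\gg x/(\log x)^{1+1} = x/(\log x)^2$, which after renaming $n$ as $p$ is exactly the assertion of Conjecture \ref{HLsophie}.

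First I would verify that $\mathcal{A} = \{2\}$ is admissible in the sense defined just above Conjecture \ref{HLconjecture}. The polynomial $X(2X+1)$ has at most two zeros modulo any prime $p$: modulo $p=2$ one obtains only $X \equiv 0$, so $\omega(2) = 1 < 2$, and modulo any odd prime $p$ one obtains $X \equiv 0$ and $X \equiv -2^{-1} \pmod{p}$, so $\omega(p) = 2 < p$. Hence admissibility holds at every prime, and invoking Conjecture \ref{HLconjecture} supplies the required lower bound with $s+1 = 2$.

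The genuine obstacle is of course Conjecture \ref{HLconjecture} itself, which is open even in this two-form case: it is not even known unconditionally that there are infinitely many Sophie Germain primes. A matching upper bound $\ll x/(\log x)^2$ is standard and follows from the Selberg sieve applied to the pair $n,\, 2n+1$ (this is precisely the type of configuration that yields Brun--Titchmarsh style estimates, cf.\ \Cref{BT-thm}). Producing a lower bound of this strength, however, would require breaking the parity barrier of sieve theory and is therefore well beyond the reach of any direct approach; so in the present paper the statement must be recorded as a conjecture rather than a theorem.
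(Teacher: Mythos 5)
This statement is a conjecture, not a theorem, so the paper offers no proof; your derivation of it as the instance $\mathcal{A}=\{2\}$, $b=1$ of Conjecture \ref{HLconjecture} (with the admissibility check $\omega(2)=1$, $\omega(p)=2$ for odd $p$) is correct and is exactly how the paper itself relates the two conjectures in Section \ref{sec:Kummer}. Your closing remarks on the parity obstruction and the Selberg-sieve upper bound are accurate and appropriately flag that the lower bound remains open.
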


\par The following result, conjectured 
by Erd\H{o}s (1988), shows that there are admissible
sets having  arbitrarily large measure $\mu$.

\begin{Thm*}[Granville \cite{Gr}]
There is a sequence of admissible sets $\mathcal A_1,\mathcal A_2,\ldots$ 
such that $\lim_{j\rightarrow \infty}$ $\mu(\mathcal A_j)=\infty$. 
We have $\overline{\mathcal M} = [0,\infty]$, with $\overline{\mathcal M}$ 
the closure of the set $ \{ \mu(\mathcal{A}) : \mathcal{A} \text{ is ~ an ~ admissible ~ set}\}$. 
\end{Thm*}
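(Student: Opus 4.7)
The plan is to prove the two assertions in turn. For the first, I would construct admissible sets $\mathcal{A}_j$ with $\mu(\mathcal{A}_j)\to\infty$ by a sieve argument. Fix a large parameter $y_j$ and consider the set $S_j$ of integers $a\in[1,N_j]$ with $a\not\equiv -1\pmod p$ for every prime $p\le y_j$: the congruence condition makes $X=1$ an automatic witness to $\omega(p)<p$ at each $p\le y_j$, since $a_i\cdot 1+1\not\equiv 0\pmod p$. By Mertens and partial summation, $|S_j|\sim N_j e^{-\gamma}/\log y_j$ and $\mu(S_j)\sim e^{-\gamma}(\log N_j)/\log y_j$, so if the medium range of primes $(y_j,|S_j|+1]$ were harmless, then choosing $\log N_j/\log y_j\to\infty$ would give $\mu\to\infty$. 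Admissibility at primes $p>|S_j|+1$ is automatic from the crude bound $\omega(p)\le|S_j|+1<p$, so only the medium range is delicate.

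The main technical obstacle is precisely this medium range: at primes $p$ with $y_j<p\le|S_j|+1$, equidistribution forces $S_j\bmod p$ to cover all of $(\mathbb{Z}/p)^*$, so $S_j$ as defined is not admissible and must be thinned. I would thin $S_j$ by intersecting it with a secondary sieve designed to omit at least one nonzero residue modulo each such $p$. A Lov\'asz local lemma or Chinese remainder construction can provide, with positive probability, a choice of forbidden residues $r_p$ for $p$ in the medium range such that the intersection $\mathcal{A}_j=S_j\cap\{a:a\not\equiv r_p\pmod p\text{ for each medium }p\}$ remains admissible, while the loss of density $\prod_{y_j<p\le|S_j|+1}(1-1/p)$ is small enough that $\mu(\mathcal{A}_j)\to\infty$ still holds. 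Verifying that the parameters can be chosen simultaneously so that $|\mathcal{A}_j|$ does not exceed the new sieve threshold is the crux: the naive single-sieve construction is limited to $\mu\to e^{-\gamma}$, and beating this barrier requires the extra combinatorial input just described.

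For the second assertion, $\overline{\mathcal{M}}=[0,\infty]$, the key observation is that removing an element from an admissible set preserves admissibility: if $\mathcal{A}'=\mathcal{A}\setminus\{a\}$ then $\omega_{\mathcal{A}'}(p)\le\omega_{\mathcal{A}}(p)<p$. Given any $T\in[0,\infty)$ and $\varepsilon>0$, take by the first assertion an admissible $\mathcal{A}^*$ with $\mu(\mathcal{A}^*)\ge T+\log(1/\varepsilon)+1$, and discard from it every element $a\le 1/\varepsilon$; since $\sum_{n\le 1/\varepsilon}1/n\le\log(1/\varepsilon)+1$, the resulting admissible set $\mathcal{B}$ has $\mu(\mathcal{B})\ge T$ while every element satisfies $1/a<\varepsilon$. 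Now remove the elements of $\mathcal{B}$ one at a time: each removal decreases $\mu$ by less than $\varepsilon$, so at some step the measure of the remaining admissible subset lies within $\varepsilon$ of $T$. Hence $T\in\overline{\mathcal{M}}$; combined with $\infty\in\overline{\mathcal{M}}$ from the first assertion, this gives $\overline{\mathcal{M}}=[0,\infty]$.
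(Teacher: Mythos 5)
The paper does not prove this statement; it is quoted verbatim from Granville \cite{Gr}, so there is no internal proof to compare against. Your reduction of the second assertion to the first is correct and standard: any subset of an admissible set is admissible since $\omega_{\mathcal A'}(p)\le\omega_{\mathcal A}(p)$, and discarding the elements $a\le 1/\varepsilon$ and then deleting elements one at a time does let you land within $\varepsilon$ of any prescribed $T$. The problem is entirely in your first part, which is where all the difficulty of the theorem lives.

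The gap is at the step you yourself flag as ``the crux'', and the fix you propose cannot work. Admissibility at a medium prime $p\in(y_j,|S_j|+1]$ forces $\mathcal A_j$ to avoid at least one \emph{entire} nonzero residue class modulo $p$ (since $x\mapsto -x^{-1}$ permutes $(\Z/p\Z)^*$, the condition $\omega(p)<p$ is exactly that $\{a\bmod p: a\in\mathcal A_j\}$ misses a nonzero class). Whatever residues $r_p$ you forbid, the sifted set $S_j$ is equidistributed in the classes mod $p$ for $p$ well below $|S_j|$, so vacating one class modulo each medium prime costs a density factor of order
$\prod_{y_j<p\le |S_j|}\bigl(1-\tfrac1p\bigr)\asymp \log y_j/\log N_j$,
which exactly cancels the gain $\mu(S_j)\asymp e^{-\Euler}\log N_j/\log y_j$ and returns you to the barrier $\mu\asymp e^{-\Euler}$ you named. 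The Lov\'asz local lemma and CRT address only the \emph{consistency} of the choice of forbidden residues, not this multiplicative loss, which is incurred for every admissible choice; no probabilistic selection of the $r_p$ creates measure. So the construction as described does not produce $\mu(\mathcal A_j)\to\infty$, and since the second assertion (and, via Granville's conditional theorems quoted in Section 3, the falsity of Kummer's conjecture under Elliott--Halberstam and Hardy--Littlewood) rests on the first, the proposal does not establish the theorem. Granville's actual argument requires a genuinely different construction to beat the $e^{-\Euler}$ barrier, and you would need to supply that input rather than gesture at it.
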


\section{Connecting Kummer's ratio conjecture with prime power sums}
\label{sec:Kummer}

\par The orthogonality property of odd characters 
$$
\frac{2}{q-1} \sum_{\chi(-1)=-1} \chi(a) \overline{\chi}(b) = 
\begin{cases}
\pm 1, & b \equiv \pm a  \pmod*{q},\\
0, &\text{otherwise},
\end{cases}
$$
gives us
$$\sum_{\chi(-1)=-1} \log(L(s,\chi)) = \frac{q-1}{2}\lim_{x\rightarrow \infty}
\Big(
\sum_{\substack{m\ge 1 ;\ p^m\le x\\ p^m\equiv 1 \pmod*{q}}}\!\! \frac{1}{mp^{ms}}- 
\sum_{\substack{m\ge 1 ;\ p^m\le x \\ p^m\equiv -1 \pmod*{q}}}\!\! \frac{1}{mp^{ms}}\Big).
$$
In the papers \cite{deb,MasMon,Puchta} just mentioned, the authors consider the latter function
in a neigborhood of $s=1$ (but whereas
Debaene \cite{deb} and Puchta \cite{Puchta} take higher derivatives into account,
Masley and Montgomery \cite{MasMon} stopped at the first derivative). Here, we will actually set $s=1$,
which in combination with \eqref{hasse} yields
\begin{equation}
\label{loggie}
r(q)=\log R(q) =\frac{q-1}{2}\lim_{x\rightarrow \infty}
\Big(\sum_{\substack{m\ge 1 ;\ p^m\le x\\ p^m\equiv 1 \pmod*{q}}}\!\! \frac{1}{m p^m}-
\sum_{\substack{m\ge 1; \ p^m\le x \\ p^m\equiv -1 \pmod*{q}}}\!\! \frac{1}{mp^m}\Big).
\end{equation}
\begin{Defi}
The argument in the limit we denote by $f_q(x)$ and $f_q:=\lim_{x\rightarrow \infty}f_q(x)$.
\end{Defi}
Note that Kummer's conjecture is equivalent with $f_q=o(1/q)$ as $q$ tends to infinity.
The idea is now to choose $x$ as small 
as possible so that the resulting error 
$f_q-f_q(x)$ is still reasonable. In 
attempting to decrease $x$, the Bombieri-Vinogradov theorem and 
the Brun-Titchmarsh inequality play a crucial role.
The main contribution to $f_q(x)$ comes from the
term with $m=1$, and is denoted by $g_q(x)$:
\begin{equation}
\label{geeq}
g_q(x):=\sum_{\substack{p\le x\\ p\equiv 1 \pmod*{q}}}\!\! \frac{1}{p}-
\sum_{\substack{p\le x \\ p\equiv -1 \pmod*{q}}}\!\! \frac{1}{p}.
\end{equation}
In the following, we will also use the notation
\begin{equation}
\label{Sbx-def}
S_q(b,x):=\sum_{\substack{p\le x\\ p\equiv b \pmod*{q}}}\!\! \frac{1}{p},
\end{equation}
where $b\in\{-1,1\}$,
so that $g_q(x)=S_q(1,x)-S_q(-1,x)$.

Taking all this into account
Granville \cite{Gr} showed that if Kummer's conjecture is true then for every $\delta>0$ we must have
\begin{equation*}
g_q(q^{1+\delta})=o\Bigl(\frac{1}{q}\Bigr),
\end{equation*}
for all but at most $2x/(\log x)^{3}$ primes $q\le x$. He used this to show that
$c^{-1}\le R(q)\le c$
for a positive proportion $\rho(c)$ of primes $q\le x$, where $\rho(c)\rightarrow 
1$ as $c$ tends to infinity. 
\par Murty and Petridis \cite{MP} improved this as follows.
\begin{Thm*}
There exists a constant $c>1$ such that for a sequence of primes with natural density 
1 we have $$ \max\{R(q),R(q)^{-1}\}\le c.$$ 
If the Elliott-Halberstam conjecture (Conjecture \ref{EHconjecture}) is true, then
we can take $c=1+\epsilon$ for any fixed $\epsilon>0$.
\end{Thm*}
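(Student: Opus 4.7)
My plan is to begin from identity~\eqref{loggie}, which writes $r(q) = \tfrac{q-1}{2}\, f_q$ as a signed sum over prime powers lying in the residue classes $\pm 1 \pmod{q}$, and to split $f_q$ into its first-power part $g_q(\infty)$ and the tail over exponents $m \ge 2$. The higher-power contribution is benign: any prime $p$ with $p^m \equiv \pm 1 \pmod{q}$ and $m \ge 2$ satisfies $p^m \ge q-1$ and lies in at most $2m$ residue classes modulo $q$, so summing $1/(mp^m)$ over all admissible $m$ and classes yields $O(1/q)$, contributing only $O(1)$ to $r(q)$ uniformly in $q$.

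The main task is to control $\tfrac{q-1}{2}\, g_q(\infty)$ on a density-one set of primes. I would introduce a cutoff $x=x(q)$, split $g_q(\infty) = g_q(x) + \bigl(g_q(\infty)-g_q(x)\bigr)$, and choose $x=q^{2+\delta}$ so that the Bombieri--Vinogradov bound~\eqref{BV-type-ineq} is applicable at modulus $q$. Writing $g_q(x)$ via \eqref{geeq} in terms of $\theta(x;q,\pm 1)$ by partial summation, one obtains $\sum_{q\le Q \text{ prime}} q\, |g_q(x)| = o\!\left(Q/\log Q\right)$, so Chebyshev's inequality shows that $q\,|g_q(x)|$ stays bounded (indeed is $o(1)$) outside a set of primes of density zero. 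The tail $g_q(\infty)-g_q(x)$ is treated by applying partial summation to Siegel--Walfisz and exploiting the cancellation between the two classes $\pm 1 \pmod{q}$: the dominant $(\log\log T)/\varphi(q)$ pieces cancel and the remainder is $o(1/q)$, whose $\tfrac{q-1}{2}$-multiple is negligible. Assembling these pieces yields the unconditional statement with some absolute $c>1$.

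For the conditional refinement, Conjecture~\ref{EHconjecture} permits the much tighter cutoff $x = q^{1+\epsilon}$; rerunning the averaging argument above with this $x$ gives $\tfrac{q-1}{2}\, |g_q(x)| < \epsilon$ on a density-one set of primes for any fixed $\epsilon>0$. Combining this with the higher-power bound (which can be forced below $\epsilon$ by restricting to sufficiently large $q$) produces $|r(q)| < 2\epsilon$ on a density-one set, so any $c>1+2\epsilon$ is admissible; rescaling $\epsilon$ gives the stated form $c=1+\epsilon$.

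I expect the principal difficulty to lie in the quantitative control of the tail $g_q(\infty)-g_q(x)$: absent a Siegel-zero-free region the Siegel--Walfisz error is ineffective in $q$, so the argument cannot rely on pointwise bounds for $\pi(t;q,\pm 1)$ separately but must exploit the cancellation between the $\pm 1 \pmod{q}$ classes, combined with the averaging over $q$ furnished by Bombieri--Vinogradov. Upgrading Granville's positive-proportion conclusion to a density-one one at this step also seems to require a variance (second-moment) estimate sharper than the first-moment bound read off directly from~\eqref{BV-type-ineq}.
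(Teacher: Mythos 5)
The paper offers no proof of this statement: it is quoted verbatim from Murty--Petridis \cite{MP} (``Murty and Petridis improved this as follows''), so your argument has to stand on its own, and as written it does not. The overall architecture (peel off the prime powers, truncate $g_q$, apply Bombieri--Vinogradov, tighten the cutoff under Elliott--Halberstam) is the right skeleton, but the central step is wrong. You claim that with $x=q^{2+\delta}$ the bound \eqref{BV-type-ineq} yields $\sum_{q\le Q} q\,|g_q(x)| = o(Q/\log Q)$. Bombieri--Vinogradov controls $\pi(t;q,\pm1)-\pi(t)/\varphi(q)$ only for $t\gg q^{2}(\log t)^{2B}$; after partial summation, $g_q(x)$ still contains the range $q<p<q^{2}$, where the modulus exceeds $\sqrt{t}$ and \eqref{BV-type-ineq} is silent. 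Writing $p=kq\pm1$, sieve upper bounds for the pairs $(q,kq\pm1)$ show that $\sum_{q\le Q} q\,S_q(b,q^{2})$ is of order $\pi(Q)$ times a positive constant (each dyadic block of $k$ contributes $\asymp 1/\log Q$ per prime $q$ on average), not $o(\pi(Q))$. Any smallness of $|g_q|$ in this range must therefore come from cancellation between the classes $+1$ and $-1$, which requires correlation estimates for primes of the form $kq\pm1$, not Bombieri--Vinogradov. Moreover, even granting a first moment of size $O(\pi(Q))$, Markov/Chebyshev only gives ``$\le C_\eta$ outside a set of relative density $\eta$'' --- exactly Granville's positive-proportion result with $\rho(c)\to1$, not density one for a single fixed $c$. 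Closing that gap (a variance or restricted-moment argument for the contribution of $q<p<q^{2+\delta}$) is precisely the content of the Murty--Petridis theorem; you flag it as a difficulty in your last paragraph, but the body of your proof asserts the conclusion as if it followed from \eqref{BV-type-ineq}.

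Two further problems. Siegel--Walfisz cannot be ``applied by partial summation'' to the tail $p>x$: it requires $q\le(\log t)^{A}$, whereas here $q\asymp t^{1/(2+\delta)}$; the tail must instead be handled by Bombieri--Vinogradov on dyadic blocks averaged over $q$, or by the explicit formula and zero-density input as in the proof of Theorem \ref{rq-direct} (where it does work, since the main terms for the classes $\pm1$ indeed cancel). Second, for the conditional refinement you need the prime-power contribution $\frac{q-1}{2}|t_q|$ to be $<\epsilon$, but your own uniform bound for it is $O(1)$, and Lemma \ref{expl-new} shows the uniform constant is about $0.82$, which does not tend to $0$ as $q\to\infty$; making this term $<\epsilon$ requires yet another density-one reduction (for all but $O(Q^{1/2+o(1)})$ primes $q\le Q$, no $kq\pm1$ with $k\le(\log q)^{2}$ is a perfect prime power), not merely ``$q$ sufficiently large''. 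Finally, under Elliott--Halberstam the range $q<p<q^{1+\epsilon}$ remains uncontrolled by the conjecture itself and must again be handled by the pair-correlation and concentration argument.
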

Although typically $R(q)$ is close to $1$, conjecturally very different behavior 
also occurs 
(on very thin sets of primes).
\begin{Thm*}[Granville \protect{\cite[Theorems~2~and~4]{Gr}}]
If the lower bound version of the Hardy-Littlewood conjecture (Conjecture \ref{HLconjecture})
is true, and also the Elliott-Halberstam conjecture (Conjecture \ref{EHconjecture}), 
then for any admissible set  ${\mathcal A}$, the numbers
$e^{\mu(\mathcal A)/2}$ and $e^{-\mu(\mathcal A)/2}$ are both
limit points of the sequence $\{R(q):q\text{~is~prime}\}$.
Furthermore, this sequence has $[0,\infty]$ as set of limit points.
\end{Thm*}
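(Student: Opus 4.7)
The plan is to manufacture, for each admissible set $\mathcal{A}$ and each sign $b \in \{-1,+1\}$, an infinite subsequence of primes $q$ along which $r(q) \to b\mu(\mathcal{A})/2$; the closure assertion then follows by combining this construction with the Granville theorem on admissible sets stated above.

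Fix $\mathcal{A} = \{a_1,\ldots,a_s\}$ admissible, a sign $b \in \{-1,+1\}$, and a large parameter $Q$. Conjecture \ref{HLconjecture} applied to the $s$ linear forms $a_iX + b$ yields the set
\[
\mathcal{Q}_b(Q) := \{q \le Q \text{ prime} : a_iq+b \text{ is prime for each } i=1,\ldots,s\}
\]
of cardinality $\gg_{\mathcal{A}} Q/(\log Q)^{s+1}$. For each $q \in \mathcal{Q}_b(Q)$ the primes $p_i := a_iq+b$ lie in the residue class $b \pmod{q}$ and contribute
\[
\sum_{i=1}^s \frac{1}{p_i} = \frac{\mu(\mathcal{A})}{q}\bigl(1+O_{\mathcal{A}}(q^{-1})\bigr)
\]
to $S_q(b,y)$ as soon as $y \ge a_s q$, so $g_q(y)$ already carries the prescribed bias $b\mu(\mathcal{A})/q$.

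Next I would appeal to Conjecture \ref{EHconjecture}. Setting $y := Q^{1+\delta}$ and applying partial summation yields, for every $A>0$,
\[
\sum_{q \le Q}\max_{c \in \{\pm 1\}} \Bigl|S_q(c,y) - \frac{1}{\varphi(q)}\sum_{p \le y}\frac{1}{p}\Bigr| \ll_{A,\delta} \frac{1}{(\log Q)^{A}}.
\]
The Mertens-type main term on the right is independent of $c$ and so cancels in the difference $g_q(y) = S_q(1,y)-S_q(-1,y)$; Markov's inequality with $A > s+2$ then shows that all but $o(|\mathcal{Q}_b(Q)|)$ primes $q \in \mathcal{Q}_b(Q)$ satisfy $g_q(Q^{1+\delta}) = b\mu(\mathcal{A})/q + o(1/q)$. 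The tail $p > Q^{1+\delta}$ in \eqref{loggie} is controlled by the same EH bound applied at two scales, while the prime powers with $m \ge 2$ contribute $O((q\log q)^{-1})$: indeed $p^m \equiv \pm 1 \pmod{q}$ forces $p^m \ge q-1$, and Classical Theorem \ref{BT-thm} combined with partial summation supplies the stated bound. Selecting one suitable prime $q_Q$ for each $Q \to \infty$ produces a subsequence along which
\[
r(q_Q) = \frac{q_Q-1}{2}\bigl(b\mu(\mathcal{A})/q_Q + o(1/q_Q)\bigr) \longrightarrow \frac{b\mu(\mathcal{A})}{2}.
\]

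For the closure statement, $\overline{\mathcal{M}} = [0,\infty]$ provides admissible sets $\mathcal{A}_j$ with $\mu(\mathcal{A}_j)$ dense in $[0,\infty]$; since the set of limit points of $\{R(q)\}$ is a closed subset of $[0,\infty]$ containing $e^{\pm\mu(\mathcal{A}_j)/2}$ for every $j$ by the first part, it must coincide with $[0,\infty]$. The main obstacle I expect is the quantitative comparison of the HL-thin set (of relative density $(\log Q)^{-(s+1)}$) with the EH-exceptional set (of density $(\log Q)^{-A}$ for every $A$), which forces $A$ to depend on $s$ and requires a careful partial summation that converts the Elliott--Halberstam hypothesis into a usable statement about $S_q(c,y)$ rather than $\pi(y;q,a)$.
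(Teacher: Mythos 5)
The paper does not prove this statement; it is quoted verbatim from Granville \cite[Theorems~2~and~4]{Gr}, so your argument can only be judged on its own terms. Your overall architecture is the right one: the Hardy--Littlewood conjecture manufactures the bias, Elliott--Halberstam controls the bulk of the prime sum with an exceptional set whose density must be beaten down below the density $(\log Q)^{-(s+1)}$ of the HL set, and the closure assertion follows from $\overline{\mathcal M}=[0,\infty]$ together with the closedness of the set of limit points. The closure paragraph and the observation that $A$ must exceed $s+2$ are both correct.

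The genuine gap is in the middle step. Conjecture \ref{EHconjecture} concerns moduli $q\le u^{1-\eps}$, so for a fixed modulus $q$ it controls $\pi(u;q,\pm1)$ only at scales $u\ge q^{1+\eps/(1-\eps)}$; it says nothing whatsoever about primes $p\equiv\pm1\pmod*{q}$ with $q<p\le q^{1+\eps'}$. This is precisely the range in which each individual prime contributes $\asymp 1/q$ to $g_q$ --- the same order as the bias $b\mu(\mathcal A)/q$ you are trying to isolate. Consequently your displayed average bound for $\max_c\bigl|S_q(c,y)-\varphi(q)^{-1}\sum_{p\le y}1/p\bigr|$ is not a consequence of EH, and the conclusion that $g_q(Q^{1+\delta})=b\mu(\mathcal A)/q+o(1/q)$ for almost all $q\in\mathcal Q_b(Q)$ does not follow: nothing in your argument prevents some other small form $a'q+c$ (with $a'\notin\mathcal A$ or $c=-b$) from also being prime for many $q\in\mathcal Q_b(Q)$, and each such extra prime shifts $r(q)$ by a bounded amount. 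Indeed, the set of $q$ carrying such an extra small prime has relative density $\asymp(\log Q)^{-1}$ among primes, which for $s\ge 2$ dwarfs the HL set, so a first moment over \emph{all} $q\le Q$ followed by Markov cannot locate the good primes inside $\mathcal Q_b(Q)$. Two further ingredients are required: (i) an upper-bound sieve estimate showing that the number of $q\le Q$ with $a_1q+b,\dots,a_sq+b$ all prime \emph{and} some additional $a'q+c$, $a'\le D$, also prime is $\ll_D Q/(\log Q)^{s+2}=o(|\mathcal Q_b(Q)|)$, so that almost all $q$ in the HL set carry only the prescribed small primes; and (ii) the Brun--Titchmarsh theorem (Classical Theorem \ref{BT-thm}) with partial summation to bound the contribution of the intermediate range $Dq<p\le q^{1+\eps'}$ by roughly $2\eps'/q$, since this range is invisible to EH. Finally, your pointwise claim that the prime powers contribute $O((q\log q)^{-1})$ is too strong: Lemma \ref{expl-new} only gives $|t_q|\ll 1/q$, and $t_q\asymp 1/q$ genuinely occurs (e.g.\ when $2q-1$ is the square of a prime); here too one must argue that such $q$ are rare (there are $O(\sqrt{Q})$ of them up to $Q$) rather than invoke a uniform bound.
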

\begin{cor*}
Under the above conjectures, Kummer's ratio conjecture \ref{Kconjecture} is false.
\end{cor*}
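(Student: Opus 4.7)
The plan is essentially immediate from Granville's theorem stated just above. Kummer's ratio conjecture (Conjecture~\ref{Kconjecture}) is the assertion that $R(q)\to 1$ as $q\to\infty$; equivalently, $1$ is the unique limit point of the sequence $\{R(q):q\text{ prime}\}$. To refute it under the stated hypotheses, I need only exhibit one limit point different from $1$.

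Granville's theorem supplies such points in abundance. The cleanest route is to invoke its final sentence: under Conjectures~\ref{HLconjecture} and~\ref{EHconjecture}, the set of limit points of $\{R(q)\}$ equals $[0,\infty]$, which is certainly not $\{1\}$. Alternatively, and more explicitly, I would pick a single admissible set $\mathcal{A}$ with $\mu(\mathcal{A})>0$: the simplest choice is $\mathcal{A}=\{2\}$, for which $\omega(2)=1$ and $\omega(p)=2$ for odd primes $p$ (corresponding to the two roots $X\equiv 0,\,-1/2\pmod{p}$ of $X(2X+1)$), so $\mathcal{A}$ is admissible, and $\mu(\mathcal{A})=1/2$. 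Granville's theorem then produces the limit point $e^{1/4}>1$, directly contradicting $R(q)\to 1$. (The primes $q$ driving this behaviour are Sophie Germain primes, cf.\,Conjecture~\ref{HLsophie}.) One can amplify the failure by applying the Erd\H{o}s--Granville existence result to a sequence of admissible sets $\mathcal{A}_j$ with $\mu(\mathcal{A}_j)\to\infty$, which shows that $\{R(q)\}$ is in fact unbounded above (and, via the $e^{-\mu/2}$ branch, has $0$ as a limit point).

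There is no genuine obstacle: the corollary is a direct logical consequence of the theorem preceding it, and the only piece of housekeeping is the admissibility verification for some concrete set of positive measure, which is trivial as indicated above.
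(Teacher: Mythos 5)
Your argument is correct and is essentially the paper's own: the corollary is an immediate logical consequence of the Granville theorem stated just above it, since any limit point of $\{R(q)\}$ other than $1$ (e.g.\ $e^{1/4}$ from the admissible set $\mathcal{A}=\{2\}$, or the full set $[0,\infty]$ of limit points) contradicts $R(q)\to 1$. The paper even makes the same remark immediately after the corollary, noting that taking $\mathcal{A}=\{2\}$ lets one weaken the hypothesis to the Sophie Germain case of Hardy--Littlewood, exactly as you observe.
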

Actually, on taking ${\mathcal A}=\{2\}$, 
it suffices to assume the Hardy-Littlewood conjecture for Sophie German primes 
(Conjecture \ref{HLsophie}) instead of the full Hardy-Littlewood conjecture (Conjecture \ref{HLconjecture}), and Granville proved that 
$\min\{R(q),R(q)^{-1}\} >e^{0.249}$ for $\gg x/(\log x)^2$ primes $q\le x$. 
Likewise, other cases where $\mathcal A$ contains only one element produce a relatively 
thick set of $R(q)$ outliers, something also our numerics show. The value distribution of $r(q)$ 
was studied in detail by Croot and Granville \cite{CG}.

One can also wonder how large $R(q)$ can be as a function of $q$, an issue that we discussed just after Remark \ref{51}.

\subsection{A  useful lemma}
The following lemma, inspired by a result of  Ankeny-Chowla, see the estimate of $C_4$ in \cite{AC}
and \cite[Lemmas 1 and 2]{Gr},
will be a crucial ingredient in the proof of Theorem \ref{rq-direct}.
Let
\begin{equation}\label{mad}
t_q:=
\sum_{\substack{m\ge 2 \\ p^m\equiv 1 \pmod*{q}}}\!\! \frac{1}{m p^m}-
\sum_{\substack{m\ge 2 \\ p^m\equiv -1 \pmod*{q}}}\!\! \frac{1}{mp^m}.
\end{equation}
By \eqref{loggie}-\eqref{geeq} and \eqref{mad} we have
\begin{equation*}
r(q)=\frac{q-1}2\bigl(t_q+\lim_{x\rightarrow \infty}g_q(x)\bigr).
\end{equation*}
\begin{lem}
\label{expl-new} 
There exists a constant $c>0$ such that for every odd prime $q$ we have
\begin{equation*}
\vert t_q \vert < \frac{1}{q}\Bigl( \frac{43}{13}-\frac{18}{13}\zeta(3)\Bigr)
+ \frac{c}{q\log q}.
\end{equation*}
\end{lem}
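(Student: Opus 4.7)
By the triangle inequality,
\[
|t_q|\le\sum_{\substack{p,\,m\ge 2\\p^m\equiv\pm 1\pmod*{q}}}\frac{1}{mp^m}.
\]
The key structural fact is that $p^m\equiv\pm 1\pmod*{q}$ together with $p^m>1$ (which is automatic for $p$ prime and $m\ge 2$) forces $p^m\ge q-1$; this inequality is the source of the $1/q$ scaling in the final bound.

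I would then split the sum according to whether $p<q$ or $p>q$. For $p>q$ the elementary estimate $\sum_{m\ge 2}1/(mp^m)\le 1/(2p(p-1))\ll 1/p^2$, together with the prime number theorem giving $\sum_{p>q}1/p^2\ll 1/(q\log q)$, shows that this contribution is absorbed into the error term.

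For $p<q$ each residue class modulo $q$ contains at most one prime (namely $p=a$ if the residue $a$ itself is prime). For such a $p$ the set of exponents $m\ge 2$ with $p^m\equiv\pm 1\pmod*{q}$ is $e\mathbb{Z}_{\ge 1}\cap\{m\ge 2\}$, where $e$ is the order of the image of $p$ in the quotient group $(\mathbb{Z}/q\mathbb{Z})^*/\{\pm 1\}$. Summing the resulting geometric or alternating-geometric series in $m$ collapses the inner sum into a closed form of the type $\pm\tfrac{1}{e}\log(1\mp p^{-e})$; bounding this by its linear Taylor term $1/(e\,p^e)$ and using $p^e\ge q-1$ produces the leading contribution, with a second-order error of size $O(1/p^{2e})$.

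The final step is the explicit evaluation of the resulting sums, grouping primes by the value of their order $d$ (using that $(\mathbb{Z}/q\mathbb{Z})^*$ contains at most $\varphi(d)$ residues of order $d$) and treating the dominant $m=2$ contribution separately from the tail $m\ge 3$, where the factor $\zeta(3)$ naturally appears through sums of the form $\sum 1/m^3$ arising from the $1/(ep^e)$ weights. The main obstacle is carrying out this numerical bookkeeping carefully enough to extract the precise constant $\tfrac{43}{13}-\tfrac{18}{13}\zeta(3)$ while keeping the higher-order terms within the $c/(q\log q)$ budget; similar, though less refined, computations already appear in Ankeny--Chowla~\cite{AC} and in Granville~\cite[Lemmas~1~and~2]{Gr}.
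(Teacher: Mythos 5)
Your plan has two genuine gaps, one quantitative and one structural. First, opening with the triangle inequality gives away a factor of $2$ that cannot be recovered later. Writing $t_q = S_q(1)-S_q(-1)$ with $S_q(b):=\sum_{m\ge 2,\,p^m\equiv b\pmod{q}}1/(mp^m)\ge 0$, one has the sharper bound $|t_q|\le \max\{S_q(1),S_q(-1)\}$, and the paper bounds each $S_q(b)$ separately by $\frac1q\bigl(\frac{43}{13}-\frac{18}{13}\zeta(3)\bigr)+O\bigl(\frac{1}{q\log q}\bigr)$. Your triangle inequality instead bounds $|t_q|$ by the \emph{sum} $S_q(1)+S_q(-1)$; since the residue classes $rq+1$ and $rq-1$ are disjoint, the worst-case configurations for the two classes can occur simultaneously, so your route can only yield the leading constant $\frac{2}{q}\bigl(\frac{43}{13}-\frac{18}{13}\zeta(3)\bigr)$, twice what is claimed.

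Second, the ``numerical bookkeeping'' you defer to the last paragraph is the entire content of the lemma, and the idea needed to carry it out is missing. The paper's mechanism is a greedy packing: for each $m\ge 2$ there are at most $m$ primes $p$ with $p^m\equiv b\pmod{q}$ and $p^m\le q(\log q)^2$ (at most $m$ roots of $x^m\equiv b$); the numbers $p^m$ for distinct pairs $(p,m)$ are distinct and each has the form $rq+b$; hence $\sum 1/(mp^m)$ is maximized by assigning the smallest $r$'s to the smallest $m$'s, namely $r\in\{1,2\}$ to $m=2$, $r\in\{3,4,5\}$ to $m=3$, and generally $r\in\bigl[\frac{m^2-m}{2},\frac{m^2+m}{2}-1\bigr]$ to exponent $m$. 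Evaluating $\sum_{m\ge2}\frac1m\sum_r\frac{1}{rq+b}$ (isolating $m=2$ and using a $\log(1+2/m)$ bound together with $\sum_{m\ge3}m^{-3}=\zeta(3)-\frac98$) is precisely what produces $\frac{43}{13}-\frac{18}{13}\zeta(3)$. Your reorganization by the order $e=e(p)$ of $p$ in $(\Z/q\Z)^*/\{\pm1\}$ is a legitimate alternative bookkeeping of the same sum, and your $\varphi(d)$ count plays the role of ``at most $m$ roots''; but without the distinctness-plus-rearrangement step it fails: bounding each main term by $1/(d(q-1))$ and summing $\varphi(d)$ terms over all $d\mid\frac{q-1}{2}$ gives $\sum_{d\mid (q-1)/2}\varphi(d)/(d(q-1))$, which can be as large as $2^{\omega((q-1)/2)}/q=q^{o(1)}/q$, not $O(1/q)$. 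You would also need to show that the prime powers with $q(\log q)^2<p^m<q^2$ contribute only $O(1/(q\log q))$ (the paper invokes Granville's Lemma~1 for this range), which your two-way split $p<q$ versus $p>q$ does not address.
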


\begin{proof}
Given any integer $b$ we put
\begin{equation}
\label{Sb-def}
S_q(b):=\sum_{\substack{m\ge 2 \\ p^m\equiv b \pmod*{q}}}\!\! \frac{1}{m p^m}.
\end{equation}
In the rest of the proof we will assume that $b\in \{-1,1\}$.
By \eqref{mad} we have $t_q = S_q(1)-S_q(-1)$, and using
$S_q(b)>0$, we
obtain $-S_q(-1) \le t_q \le S_q(1)$. Thus,
\begin{equation}
\label{main-lemma1}
\vert t_q \vert \le 
\max \{S_q(1), S_q(-1)\}.
\end{equation}
We split $S_q(b)$ into three subsums
according to whether $p^m \leq q  (\log q)^{2}$, $q(\log q)^{2} < p^m < q^2$
or $p^m>q^2$.
The contribution to the final estimate of the sums over the first range
 will be less than $c_1/q$, 
with a constant $c_1>0$ that will be explicitly determined, while the
others contribute $\ll 1/(q\log q)$.

We first consider the case $p^m>q^2$. 
For any prime $p>q\ge 3$ we have
\[
\sum_{ m \geq 2 } \frac{1}{m p^m}
\le 
\frac{1}{2} \frac{1}{p(p-1)} 
<\frac{1}{p^2}.
\]
Moreover, for $p\le q$, the condition $p^m > q^2$ implies
$m\ge 3$ and hence 
\[
\sum_{\substack{ m \geq 2;\, p\le q \\ p^m > q^2}} \frac{1}{m p^m}
\le
\frac{1}{3}\sum_{\substack{ m \geq 3;\, p\le q\\ p^m > q^2}} \frac{1}{p^m}
=
\frac{1}{3p^3} \sum_{j\ge 0}\frac{1}{p^j}
\le
\frac{1}{3q^2} \frac{p}{p-1} 
<\frac{1}{q^2}.
\]
Combining these estimates 
we arrive at
\begin{align}
\notag
\sum_{\substack{m \geq 2 ;\ p^m>q^2 \\ p^{m} \equiv b \pmod*{q}}}\frac{1}{m p^m}
& \leq 
 \sum_{p > q} \sum_{m \geq 2} \frac{1}{m p^m} +
 \sum_{p \le q} \sum_{\substack{ m \geq 2 \\ p^m > q^2}} \frac{1}{m p^m}
<\sum_{p > q} \frac{1}{p^2} +  \sum_{p \le q}  \frac{1}{q^2}
\\
\label{inter-new}
& = 2\int_q^{\infty} \frac{\pi(t)}{t^3} dt
\ll
\int_q^{\infty} \frac{dt}{t^2 \log t} 
\ll
\frac{1}{q \log q},
\end{align}
where we used the 
partial summation
formula
and  the Chebyshev bound in the weaker form $\pi(t) \ll t/\log t$.
{}From the proof of \cite[Lemma 1]{Gr} we obtain
\begin{equation}
\label{Granville-lemma1}
\sum_{\substack{ m \geq 2 ;\ q(\log q)^{2} < p^m < q^2 \\ p^{m} \equiv b \pmod*{q}}}
\frac{1}{m p^m}
\le 
\sum_{m=2}^{\lfloor 4 \log q \rfloor} \frac{1}{m} \frac{2m}{q (\log q)^2}
\ll \frac{1}{q \log q}.
\end{equation}

We now proceed as in the proof of \cite[Lemma 2]{Gr}.
We are left in \eqref{Sb-def} with the cases $p^m\equiv b \pmod*{q}$, $m\ge 2$ and $p^m \leq q  (\log q)^{2}$. 
There are
at most $m$ values of $p$ for which $p^m\equiv b \pmod*{q}$ and $p^m \leq q  (\log q)^{2}$,
so each sum can be maximized by assuming that 
$q+b$ and $2q+b$ are squares,
$3q+b$, $4q+b$ and $5q+b$ are cubes, and so on.
Setting 
$$\alpha(m):= \frac{1}{2}(m^2 - m),\quad \beta(m):=\frac{1}{2}(m^2 + m) - 1,$$ 
we obtain
\begin{align}
\notag
\sum_{\substack{m \geq 2; \, p^m \leq q  (\log q)^{2} \\ p^m\equiv b \pmod*{q}}} &
\frac{1}{m p^m}
\le
\sum_{m\geq 2} \frac{1}{m} \!\sum_{r = \alpha(m)}^{\beta(m)} 
 \frac{1}{ rq +b} 
 \le
\sum_{m\geq 2} \frac{1}{m} \!\sum_{r = \alpha(m)}^{\beta(m)} 
 \frac{1}{rq -1} 
\\&
\notag
=
\sum_{m\geq 2} \frac{1}{m}  \!\sum_{r = \alpha(m)}^{\beta(m)} 
\frac{1}{rq}   \Bigl(1+ \frac{1}{rq -1}\Bigr)
\\&
\le
\label{isolate}
\frac{1}{q} 
\Bigl(\frac{3}{4}+ \frac{1}{2q -2} + \frac{1}{8q -4} \Bigr)
+
\frac{1}{q} 
\Bigl(1+ \frac{1}{3q -1} \Bigr)
\sum_{m\geq 3} \frac{1}{m}  \sum_{r = \alpha(m)}^{\beta(m)} 
\frac{1}{r},
\end{align}
in which we have isolated the contribution of the terms corresponding to $m=2$.
The innermost sum in \eqref{isolate} can be bounded as follows:
\begin{align}
\notag
 \sum_{r = \alpha(m)}^{\beta(m)} 
\frac{1}{r} 
&\le 
\frac{2}{m^2-m} + \int_{\alpha(m)}^{\beta(m)} \frac{du}{u}
= \frac{2}{m^2-m} + \log \Bigl(1+ \frac{2}{m} \Bigr)
\\
\notag
&
\le 
\frac{2}{m^2-m} + \frac{2}{m} \frac{3m+1}{3m+4}
= \frac{2}{m} \Bigl(\frac{m}{m-1} -\frac{3}{3m+4}\Bigr)
\le
\frac{2}{m-1} - \frac{18}{13m^2},
\end{align}
in which we used the inequalities
$\log(1+x) \le \frac{x}{2}\cdot \frac{x+6}{2x+3}$
for every $x\ge 0$,
see \cite[eq.~(22)]{Topsoe2007},
and $3m+4 \le \frac{13}{3}m$, which holds for every $m\ge 3$. 
Hence
\begin{equation}
\label{innermost1}
\sum_{m\geq 3} \frac{1}{m} 
\sum_{r = \alpha(m)}^{\beta(m)}  
\frac{1}{r}
\le
\sum_{m\geq 3} \frac{2}{m(m-1)}  
- \frac{18}{13}\Bigl(\zeta(3) - \frac{9}{8}\Bigr) 
=
\frac{133}{52}- \frac{18}{13} \zeta(3).
\end{equation}
Inserting the bound for the double sum in \eqref{innermost1} into \eqref{isolate} we obtain
\begin{equation}
\label{ahna-new}
\sum_{\substack{m \geq 2; \, p^m \leq q  (\log q)^{2} \\ p^m\equiv b \pmod*{q}}} 
\frac{1}{m p^m}
\le 
\frac{1}{q} \Bigl(\frac{43}{13}-\frac{18}{13}\zeta(3) + \frac{c_2}{q}\Bigr),
\end{equation}
where $c_2>0$ is a suitable constant.
Inserting \eqref{inter-new}-\eqref{Granville-lemma1} and \eqref{ahna-new}
into \eqref{Sb-def} we deduce that there exists a constant $c>0$ such that
\begin{equation}
\label{Sb-estim}
S_q(b) < \frac{1}{q}\Bigl( \frac{43}{13}-\frac{18}{13}\zeta(3)\Bigr)
+ \frac{c}{q\log q}.
\end{equation}
The result immediately follows from \eqref{Sb-estim} and \eqref{main-lemma1}.
\end{proof}

\begin{Rem}
Lemma \ref{expl-new} allows one to improve the estimate $\Sigma_2=\frac{q-1}{2}t_q$ in
\cite{zhangyixi}\footnote{Also \cite[Lemma~2.2]{zhangyixi} makes use of  
\cite[Lemma~10]{LuZhang} in which a term $-\log_2 2$ is missing, see also
footnote \ref{footnote-luzhang} on page \pageref{footnote-luzhang}.}, 
and hence to improve some of the results there.
\end{Rem}

\begin{Rem}
\label{sharp-constant}
The leading constant in Lemma
\ref{expl-new}
can be further improved by isolating  
more initial terms of the sum over $m$ in equation \eqref{isolate}; 
in fact, with the aid of a computer program, it is not hard to see that
isolating the contribution of the first $10$ values of $m$ (corresponding with $r=1,\dotsc,54$),
the value of this constant can be reduced from 
$\frac{43}{13}-\frac{18}{13}\zeta(3)=1.64330\dots$
to $1.60091\ldots<1.601$.
Moreover, again using a computer program, it is also possible to directly compute 
$\sum_{m=2}^{T} \frac{1}{m} 
\sum_{r = \alpha(m)}^{\beta(m)} 
\frac{1}{r}\bigl(1+ \frac{1}{ rq -1} \bigr)$
with $T=2000$ (so $r=1,\dotsc, 2\,000\,999$). 
This leads to a value $>1.59908$, and so the upper bound $1.601$ is almost optimal.
\end{Rem}

\section{Proof of Theorem \ref{rq-direct}}
\label{sec:proof-thm-rq-direct}

Using \eqref{hasse}, the Euler product for $L(1,\chi)$, $\chi\ne \chi_0$,
we obtain
\begin{equation}
\label{starting}
r(q)
= 
- \sum_{\chi(-1)=-1} \sum_p \log\Bigl(1-\frac{\chi(p)}{p}\Bigr)
=
\sum_{\chi(-1)=-1} \sum_p \sum_{m\ge1} \frac{\chi(p^m)}{mp^m}
= 
\Sigma_1 + \Sigma_2,
\end{equation}
say, where $\Sigma_1$ is the contribution of the primes ($m=1$)
and $\Sigma_2$ that of the prime powers ($m\ge 2$).
We have, see the introduction of Section \ref{sec:Kummer},
$$\Sigma_1=\sum_{\chi(-1)=-1} \sum_{p}  \frac{\chi(p)}{p}=\frac{q-1}{2}\lim_{x\rightarrow \infty}g_q(x),$$
and
$$\Sigma_2=\frac{q-1}{2}
\Big(\sum_{\substack{m\ge 2 \\ p^m\equiv 1 \pmod*{q}}}\!\! \frac{1}{m p^m}-
\sum_{\substack{m\ge 2 \\ p^m\equiv -1 \pmod*{q}}}\!\! \frac{1}{mp^m}\Big)=\frac{q-1}2 t_q.
$$
Lemma \ref{expl-new} then yields 
\begin{equation}
\label{Sigma2-estim}
\vert \Sigma_2 \vert
< \frac{43}{26}-\frac{9}{13}\zeta(3) + \frac{c}{\log q},
\end{equation}
where $c$ is a positive constant.
We will use this inequality in the proofs of all the three parts of this theorem.

{}From now on, we will assume that $b\in \{-1,1\}$ and that $q$
is a sufficiently large prime.
We now consider $\Sigma_1$; we split the prime sum into the 
ranges $p\le x_1$ and $p>x_1$ (where $x_1$
will be chosen later on), and proceed to estimate these subsums.
The first ingredient is supplied by the following inequalities valid for any $x>0$:
\begin{equation}
\label{bounds-sigma1-trunc}
- \frac{q-1}{2} S_q(-1,x)
\leq  \sum_{\chi(-1)=-1} \sum_{p \leq x}  \frac{\chi(p)}{p} 
\leq \frac{q-1}{2}S_q(1,x),
\end{equation}
where $S_q(b,x)$ is defined in \eqref{Sbx-def}.
Using $S_q(b,x)>0$, from \eqref{bounds-sigma1-trunc} we obtain 
\begin{equation}
\label{main-thm1}
\Bigl\vert  \sum_{\chi(-1)=-1} \sum_{p \leq x}  \frac{\chi(p)}{p} \Bigr\vert 
\le 
\frac{q-1}{2} \max \{S_q(1,x), S_q(-1,x)\}.
\end{equation}

We now proceed to estimate $S_q(b,x)$.
Letting $x \geq q^2$, by
partial summation and the Brun-Titchmarsh theorem,
see Classical Theorem \ref{BT-thm}, we have
\begin{align}
\notag
\frac{q-1}{2} \sum_{\substack{kq < p \leq x\\ p \equiv b \pmod*{q}} } \frac{1}{p}
&= \frac{q-1}{2} \Bigl(\frac{\pi(x;q, b)}{x} - \frac{\pi(kq; q,b)}{kq} 
+ \int_{kq}^{x} \frac{\pi(u; q,b)}{u^2} du\Bigr) 
\\
\notag
&
\le \frac{q-1}{2}
\Bigl(\frac{2}{(q-1) \log(x/q)} 
+
\frac{2}{(q-1)}
\int_{kq}^{x} \frac{du}{u \log(u/q)}
\Bigr)
\\
\label{large-primes}
&\leq  
\log_2 \bigl( \frac{x}{q} \bigr)
- \log_2 k +\frac{1}{\log q},
\end{align}
where $k\ge 3$ is an odd integer we will choose later.
Since $k$ is odd and $q\ge 3$, any prime $p \leq kq$
with $p \equiv b \pmod*{q}$ 
is of the form $p = 2jq + b$, with $j\in \{1,\dotsc,\frac{k-1}{2}\}$.
So 
\begin{equation}
\label{small-primes}
 \frac{q-1}{2} \sum_{\substack{p \leq kq \\ p \equiv b \pmod*{q}} } \frac{1}{p}  
 \le
  \frac{1}{2}   \sum_{j=1}^{(k-1)/2}  \frac{q-1}{2jq-1}
< \frac{1}{4} \sum_{j=1}^{(k-1)/2} \frac{1}{j}
= \frac{1}{4} H_{\frac{k-1}{2}},
\end{equation}
with $H_{n} := \sum_{j=1}^{n}\frac{1}{j}$ is the $n$-th harmonic number.
Combining \eqref{large-primes}-\eqref{small-primes} we obtain
\begin{equation} \label{127} 
\frac{q-1}{2}  S_q(b,x)
< \log_2 \bigl( \frac{x}{q}\bigr) + c_1(k) + \frac{1}{\log q}  ,
\end{equation}
where $c_1(k):=\frac{1}{4} H_{\frac{k-1}{2}}  - \log_2 k$.
We now choose $k$ such that $c_1(k)$ is minimal.
It is not hard to see that this happens for $k=55$ and that $C_1:=c_1(55)  < -0.4152617906$.

Let $\ell(q)$ be a function that tends to infinity arbitrarily slowly and monotonically as $q$ tends to infinity.
Taking 
$x = x_1:= q^{\ell(q)}$
and $k=55$ in \eqref{127}, we have
\begin{align} \label{122}
\frac{q-1}{2}  S(b,x_1)
& < \log_2 q 
+\log \ell(q)
+ C_1 + \frac{1}{\log q}.
\end{align}
Inserting  \eqref{122} into  \eqref{main-thm1},
we obtain\footnote{\label{footnote-luzhang}There 
is an oversight in the proof of \cite[Lemma~10]{LuZhang}
since the (positive) term $-\log_2 2$ is missing. The reasoning that leads to \eqref{124} is an 
amended and improved version of the argument of Lu-Zhang.}
\begin{equation}\label{124}
\Bigr| \sum_{\chi(-1)=-1} \sum_{p \leq x_1}  
\frac{\chi(p)}{p}
\Bigr|  
< \log_2 q 
+\log \ell(q)
+ C_1 + \frac{1}{\log q}.
\end{equation}
This inequality will be used in the proofs of the first two parts of this theorem.

If there is no odd character modulo $q$ such that $L(s, \chi)$ has a Siegel zero, then
in addition to the estimate \eqref{124} 
by \cite[Lemmas~1, 7 and~8]{LuZhang}\footnote{\label{boost-footnote}Note that 
\cite[Lemma~7]{LuZhang} holds for every $T,x_1$ such that 
$\lim_{q \to \infty} \log(qT) /\log x_1 = 0$. This  allows us to choose 
$T=q^4$ and $x_1= q^{\ell(q)}$, where $\ell(q)$ tends to infinity
arbitrarily slowly and monotonically as $q$ tends to 
infinity. The final error term in 
\cite[Lemma~8]{LuZhang} is then $\ll 1/\ell(q)=o(1)$ as $q$ tends to infinity.}
we also have
\begin{align} \label{123}
\Bigr| \sum_{\chi(-1)=-1} \sum_{p > x_1}  \frac{\chi(p)}{p} \Bigr| 
& \ll \frac{1}{\ell(q)}.
\end{align}
We remark that
\eqref{123} also holds if 
$q \equiv 1 \pmod*4$ since this implies that there does not exist 
any odd quadratic Dirichlet character modulo $q$ and hence 
its attached Dirichlet $L$-series has no Siegel zero.
Using equations \eqref{124} and \eqref{123}, we have
\begin{align}\label{125}
| \Sigma_1 |    
< \log_2 q + \log \ell(q)  +C_1 + o(1).
\end{align}
Combining \eqref{Sigma2-estim} and \eqref{125},
we obtain
\begin{align*}
| r(q) | &  
< \log_2 q +  \log \ell(q) 
+ C_1 + \frac{43}{26}-\frac{9}{13}\zeta(3)   + \frac{1}{1000} 
< \log_2 q +  \log \ell(q) + 0.41.
\end{align*}
This completes the proof of Part \ref{nosiegelzero}.

We now prove Part \ref{Siegelzero}.
The starting point is  again  \eqref{starting}, but we need a more accurate analysis of the
contribution of $\Sigma_1$.
To do so, the first step is to split the prime sum $\Sigma_1$ in three 
subsums $S_1, S_2, S_3$ defined according
to whether $p\le x_1$, $x_1 < p \le x_2$ or $p\ge x_2$, 
where $x_2 = e^q$ and
$x_1= q^{\ell(q)}$, with $\ell(q)$ being any function that 
tends to infinity arbitrarily slowly and monotonically as $q$ tends to infinity.

We already estimated $S_1$ in \eqref{124} and will proceed to estimate $S_3$.
By \cite[Lemma~1]{LuZhang}, we have
\begin{equation}
\label{C-estim}
S_3 \ll q^2 e^{-c_1 \sqrt{q}},
\end{equation}
where $c_1>0$ is an absolute constant.

For $S_2$ we follow the first part of the argument in \cite[Lemma~8]{LuZhang}.
Recall (see, e.g., \cite[Ch.~19]{Davenport}) that if $\chi$ is a non principal 
character modulo $q$ and $2 \leq T \leq x$, then
\begin{equation}
\label{explicit-formula}
\theta(x,\chi) 
:= \sum_{ p \leq x} \chi(p) \log p
= - \delta_{\beta_0}\ \frac{x^{\beta_0}}{\beta_0} - 
\sideset{}{'}\sum_{|\gamma| \leq T} \frac{x^{\rho}}{\rho} 
+ O\Bigl( \frac{x (\log qx)^2 }{T} + \sqrt{x}\Bigr),
\end{equation}
where $\delta_{\beta_0} = 1$ if the Siegel zero $\beta_0$ exists and  is zero otherwise, and
$\sum^\prime$ is the sum over all non-trivial zeros 
$\rho = \beta + i \gamma$ 
of $L(s,\chi)$,
with the exception of $\beta_0$ and 
its symmetric zero $1-\beta_0$.

Assume that there exists a Siegel zero $\beta_0$ with odd 
associated character. Then,
by the partial summation formula and \eqref{explicit-formula} with $T=q^4$, we have
\begin{align}\label{late}
\notag
S_2 := \sum_{\chi(-1) = -1} 
&\sum_{x_1 < p \le x_2} \frac{\chi(p)}{p} 
 = 
 \sum_{\chi(-1) = -1} 
 \Bigl( 
 \frac{\theta(x_2, \chi)}{ x_2 \log x_2} 
 - \frac{\theta(x_1, \chi)}{ x_1 \log x_1} 
 + \int_{x_1}^{x_2} \theta( u,\chi) \frac{1 +\log u}{ (u \log u)^2} du 
 \Bigr)\\
& = - \int_{x_1}^{x_2}  \frac{u^{\beta_0-2}}{ \log u}du - \int_{x_1}^{x_2}\Bigl( \sum_{\chi(-1) = -1} 
\sideset{}{'}\sum_{|\gamma| \leq q^4} u^{\rho - 2}\Bigr) \frac{du}{\log u} + \frac{q-1}{2} E_q,
\end{align}
where 
\begin{align}
\notag 
E_q
\ll \int_{x_1}^{x_2} \Bigl(\frac{(\log qu)^{2}}{q^4 u }  +  \frac{1}{u^{3/2}} \Bigr) \frac{du}{\log u} 
\ll \frac{1}{q^2}.
\end{align} 
By using \cite[Lemmas~7 and~8]{LuZhang}\footnote{See footnote \ref{boost-footnote} 
on page \pageref{boost-footnote}.}, we obtain that 
\begin{equation}
\label{Lemma7-LuZhang}
\int_{x_1}^{x_2}\Bigl( \sum_{\chi(-1) = -1} 
\sideset{}{'}\sum_{|\gamma| \leq q^4} u^{\rho - 2}\Bigr) \frac{du}{\log u}
\ll \frac{1}{\ell(q)}.
\end{equation}

We now proceed to evaluate the term depending on $\beta_0$ in \eqref{late}.
A direct computation using that 
$\log x_2 = q$ gives
\begin{equation*}
\int_{x_1}^{x_2}  \frac{u^{\beta_0-2}}{ \log u}du 
=  \int_{\log x_1}^{\log x_2} \frac{dt}{te^{(1-\beta_0)t}}
= E_1(1-\beta_0) 
- \int_{1-\beta_0}^{( 1-\beta_0)\log x_1} \frac{dt}{te^t}
- E_1( q(1-\beta_0)), 
\end{equation*}
where $E_1(u)$ denotes the exponential integral function.
Recalling that $x_1= q^{\ell(q)}$, 
where $\ell(q)$ tends to infinity arbitrarily slowly and monotonically as $q$ tends to 
infinity,
we  have 
\begin{equation}
\label{siegel-zero-term-tails}
\int_{1-\beta_0}^{( 1-\beta_0)\log x_1} \frac{dt}{te^t}
\le 
\log_2 x_1 =
\log_2 q + \log \ell(q)
\quad
\textrm{and}
\quad
E_1(q(1-\beta_0)) 
\ll \frac{1}{q}.
\end{equation}

Inserting \eqref{Lemma7-LuZhang}-\eqref{siegel-zero-term-tails} into  \eqref{late}, we finally obtain
\begin{equation}\label{f3}
\vert S_2 + E_1( 1-\beta_0) \vert
\le \log_2 q 
+ \log \ell(q)
+o(1).
\end{equation}
Combining \eqref{124}, \eqref{C-estim} and
\eqref{f3}, we obtain
\begin{equation}
\label{E1-eval}
\vert \Sigma_1 + E_1( 1-\beta_0)\vert 
< 2\log_2 q 
+ 2\log \ell(q)
+  C_1
+o(1).
\end{equation}
Recalling that  $\Sigma_2$ is estimated in \eqref{Sigma2-estim}, by
combining \eqref{starting}-\eqref{Sigma2-estim} and \eqref{E1-eval}, 
Part \ref{Siegelzero} follows.

It remains to prove Part \ref{under-rh-odd}. In this case 
we split $\Sigma_1$ in \eqref{starting} in two subsums $S_1, S_2$ 
defined according to $p\le x_1$ and $p>x_1$. 
Let $A>0$ be a constant to be chosen later and let $x = q^{2}(\log q)^A=:x_1$ in \eqref{127}.
We obtain
\begin{equation} \label{128}
|S_1| < \log_2 q  + C_1 +  (A+1) \frac{\log_2 q}{\log q},
\end{equation}
where $C_1:=c_1(55)  < -0.4152617906$.
Arguing as in \eqref{late}, by partial summation we  obtain
\begin{equation}\label{late-rh-odd}
S_2
 = \sum_{\chi(-1) = -1} 
 \lim_{y \to \infty} 
 \Bigl( \frac{\theta(y, \chi)}{y \log y} - \frac{\theta(x_1, \chi)}{ x_1 \log x_1} 
 + \int_{x_1}^{y} \theta( u,\chi) \frac{1 +\log u}{ (u \log u)^2} du \Bigr).
\end{equation}
Assuming Conjecture \ref{RH-odd-conjecture},
we have that $\psi(x,\chi):= \sum_{n\le x} \chi(n) \Lambda(n)
 \ll \sqrt{x} (\log x)^2$ for every odd Dirichlet character $\chi \pmod*{q}$,
see, e.g., \cite[p.~125, Ch.~20]{Davenport}.
Recalling $\psi(x,\chi) - \theta(x,\chi) \ll \sqrt{x}$, we have
that \eqref{late-rh-odd} becomes
\begin{equation}
\label{S2-rh-estim}
S_2
\ll 
\frac{q \log x_1}{\sqrt{x_1}}
\ll_A
(\log q)^{1-A/2}
= o(1),
\end{equation}
for every $A>2$.
Choosing $A=3$, by combining \eqref{128} and \eqref{S2-rh-estim} we have 
\begin{equation}
\label{sigma1-grh-estim1}
\vert \Sigma_1 \vert 
<\log_2 q  +C_1 +  \frac{c }{\sqrt{\log q}},
\end{equation}
where $c$ is a suitable positive constant.
Equations \eqref{Sigma2-estim} and \eqref{sigma1-grh-estim1} imply that
$$
|r(q)| < \log_2 q   + 0.41,
$$
and hence Part \ref{under-rh-odd} follows.
\qed

\section{Remarks on Theorem \ref{rq-direct}}
\label{sec:thm1-comments}
\noindent
In the introduction we already made some
comments on Theorem \ref{rq-direct}. Here we
make some further ones.\par

\medskip
\noindent \textbf{Comment to Remark} \ref{E1bound}.
It is easy to derive these 
estimates for $E_1(1- \beta_0)$.
We recall that 
\begin{equation}
\label{E1-series}
E_1(x) =
- \Euler -\log x  +\int_0^x (1-e^{-t})\frac{dt}{t}= 
- \Euler - \log x
- \sum_{k=1}^{\infty} \frac{(-x)^k}{(k!)k}\quad (x>0),
\end{equation}
where $\Euler$ is the Euler-Mascheroni constant.
Since 
$0<1-e^{-x}< x$, we infer from the first equality that
$$-\Euler - \log x < E_1(x) < -\Euler - \log x + x\quad (x>0).$$
On
using that for every $\eps>0$ there exists a constant $c_1(\eps)$ such that
$\beta_0 < 1- c_1(\eps)q^{-\eps}$,
the bounds for $E_1(x)$ lead to
$1 \ll E_1(1- \beta_0) < \eps \log q + c(\eps),$
where $c(\eps)$ is ineffective. 
Using the weaker,
but with an effective constant, estimate $\beta_0 < 1- cq^{-1/2}(\log q)^{-2}$ we obtain that 
$1 \ll E_1(1- \beta_0) < \frac{1}{2} \log q + 2\log_2 q + c_1$,
where $c_1>0$ is an effective constant.

\begin{Rem}[On the role of the Brun-Titchmarsh theorem in our results, I]
The power of the $\log q$-factor in the estimates for $R(q)$ in Theorem \ref{rq-direct} 
directly depends on 
\eqref{127}  
that follows from using the Brun-Titchmarsh theorem (Classical Theorem \ref{BT-thm})
in \eqref{large-primes}.
In particular, a key role in \eqref{large-primes} is played by the constant $2$ present in \eqref{BT-estim}; 
any improvement of this constant to, e.g., $2-\xi$,
$\xi\in(1,2)$, will lead to replace one $\log q$-factor with $(\log q)^{1-\xi/2}$ into Theorem \ref{rq-direct}.
From the works of Motohashi \cite{Motohashi1979}, Friedlander-Iwaniec
\cite{FriedlanderI1997}, Ramar\'e \cite[Theorem~6.5]{Ramare2009}  and Maynard
\cite[Proposition~3.5]{Maynard2013}, it is well known that replacing such a constant with any value less
than $2$ is equivalent with assuming there does not exist a Siegel zero 
for $\prod_{\chi\pmod*{q}}L(s,\chi)$.
Unfortunately, none of these results is applicable in our case since $R(q)$ 
depends on the odd Dirichlet characters only; hence, assuming, as in Part 
\ref{nosiegelzero} of Theorem \ref{rq-direct}, that the Dirichlet $L$-series associated 
to the odd characters do not have any Siegel zero is not enough to imply the possibility of using
\eqref{BT-estim} with a leading constant less than $2$.
\end{Rem}
\begin{Rem}[On the role of the Brun-Titchmarsh theorem in our results, II]
Montgomery and Vaughan, see \cite[Theorem~2]{MVsieve}, under the same hypotheses 
of Classical Theorem \ref{BT-thm}, also proved that there exists a
constant $C>0$, which they did not
make explicit, such that if $y>Cq$ then
\begin{equation}
\label{BT-alternative-MV}
\pi(x+y;q,a) - \pi(x;q,a) < \frac{2y}{\varphi(q) (\log(y/q) + \frac{5}{6})}.
\end{equation}
Usage of this estimate potentially allows
us to decrease the value of $C_1$ in Theorem \ref{rq-direct},
leading to improvements of the constants $0.41$ and $0.39$ 
(see also Remark \ref{rmk41}).
Once $C$ has been made explicit, \eqref{BT-estim} 
can be replaced by \eqref{BT-alternative-MV}.
Selberg \cite[vol.~2, p.~233]{SelbergCollected2}  obtained
\eqref{BT-alternative-MV}
with $2.8$ instead of 
$\frac{5}{6}$, but also did not make $C$ explicit.
\end{Rem}

\begin{Rem}
\label{51}
The first two parts of Theorem \ref{rq-direct}
sharpen Lu-Zhang \cite[Theorem~1]{LuZhang}
by reducing/enlarging the exponents of the $\log q$-factor 
from $\frac{7}{6}$ and $-\frac{4}{3}$ to, respectively, 
$1$ and $-1$. 
Again comparing with \cite[Theorem~1]{LuZhang}, 
in Part \ref{under-rh-odd} we assumed RH$_{\textrm{odd}}(q)$ 
(Conjecture \ref{RH-odd-conjecture}), instead of the Generalized Riemann 
Hypothesis.
Part \ref{Siegelzero} of Theorem \ref{rq-direct} improves 
the second part of Theorem~1.1 of Debaene \cite{deb} in two 
aspects:
the
term $-\log (1-\beta_0)$ is replaced
by a more accurate description involving 
$E_1(1-\beta_0)$, see \eqref{E1-series}, and 
the exponent of the  $\log q$-factor is reduced
from $4$ to $2$.
This reduction is a 
consequence of
sharpened estimates for the 
quantities $\Sigma_1,\Sigma_2$ used in the proof of Theorem \ref{rq-direct}.
\end{Rem}

\begin{Rem}
In Part  \ref{Siegelzero} 
of Theorem \ref{rq-direct} we included only the contribution
of  the Siegel zero $\beta_0$, and not 
of the zero $1-\beta_0$. We did this in order to be able to easily compare with the
result of Puchta \cite[Theorem~1]{Puchta} and Debaene \cite[Theorem~1.1]{deb}.
It is in fact pretty easy to obtain the contribution of the
zero $1- \beta_0$: arguing as in the proof of Theorem \ref{rq-direct}
we see that it equals
\begin{align} 
\notag
\int_{x_1}^{x_2}  \frac{u^{-1 -\beta_0}}{\log u}du & 
=  \int_{\log x_1}^{\log x_2} \frac{dt}{te^{\beta_0t}}
= E_1(\beta_0) 
- \int_{\beta_0}^{\beta_0\log x_1} \frac{dt}{te^t}
- E_1( q\beta_0), 
\end{align}
in which we also used the fact that $\log x_2 = q$.
Since $x_1= q^{\ell(q)}$ we also have 
\begin{equation*}
\int_{\beta_0}^{\beta_0\log x_1} \frac{dt}{te^t}
\le 
2e^{-\beta_0}
\le
2e^{-1/2}
\quad
\textrm{and}
\quad
E_1(q\beta_0) 
\ll \frac{1}{q}.
\end{equation*}
Hence, the main term in the previous formula is in the series expansion of
$E_{1}(\beta_0)$ and it is $-\log \beta_0$.
Adding the leading terms
of $E_{1}(\beta_0)$ and $E_{1}(1-\beta_0)$, we have that their total contribution
behaves as $-\log(\beta_0(1-\beta_0))$.
\end{Rem}

\section{An efficient algorithm to compute \texorpdfstring{$R(q)$}{Rq}}
\label{sec:numerical}

\newcommand{\pariversion}{PARI/GP (v.~2.15.4)}
\newcommand{\pythonversion}{Python (v.~3.11.6)}
\newcommand{\matplotlibversion}{Matplotlib (v.~3.8.2)}
\newcommand{\pandasversion}{Pandas (v.~2.1.3)}
\newcommand{\fftwversion}{FFTW (v.~3.3.10)}
\newcommand{\ubuntuversion}{Ubuntu~22.04.3~LTS} 
\newcommand{\clusteraddress}{\url{https://hpc.math.unipd.it}}
\newcommand{\capriaddress}{\url{https://capri.dei.unipd.it}}
\newcommand{\optiplexmachine}{Dell OptiPlex-3050, equipped with an Intel i5-7500 processor, 3.40GHz, 
32 GB of RAM}

\newcommand{\bound}{10^7}
 
Since $R(q)$  grows very quickly as $q$ increases, 
it is much better to evaluate $r(q)$  instead, and obtain $R(q)$ as $\exp(r(q))$.
We addressed this problem already in the preprint \cite{earlierwork}, and here present 
an updated version in which we obtain more accurate results.
For all these quantities we can use the algorithm
in \cite{Languasco2021a}, see also \cite{LanguascoR2021}, in which 
the Fast Fourier Transform (FFT) procedure is used to obtain the needed
values of $L (1,\chi)$ which are the main ingredients for getting $r(q)$.
The fundamental formula is 
the well-known relation,
see, e.g., eq.~(2.1) of Shokrollahi \cite{Shokrollahi1999}:
\begin{equation}
\label{first-factor-Bernoulli}
h_1(q) = 2q \prod_{\chi(-1) = -1} \Bigl(- \frac{B_{1,\chi}}{2} \Bigr),
\end{equation}
where $B_{1,\chi}$ is the first $\chi$-Bernoulli number
defined, see Proposition 9.5.12 of Cohen \cite{Cohen2007}, as 
\begin{equation}
\label{chi-Bernoulli-def}
B_{1,\chi}
:=
\sum_{a=1}^{q-1}  \frac{a}{q}  \chi(a) .
\end{equation}
Inserting
\eqref{chi-Bernoulli-def} and \eqref{first-factor-Bernoulli} 
into \eqref{Rq-def}, we obtain\footnote{The
term $1/q$ is taken on purpose outside the inner sum, 
as this helps in controlling the errors in the FFT procedure.}
\begin{equation*}
R(q)  
=
\Bigl(-\frac{\pi}{\sqrt{q}}\Bigr)^{\frac{q-1}{2}}  
\prod_{\chi(-1) = -1}\sum_{a=1}^{q-1}  \frac{a}{q} \chi(a).
\end{equation*}
We immediately have
\begin{equation}
\label{rq-start}
r(q)  
=  
\frac{q-1}{2} \Bigl( \log \pi - \frac{1}{2} \log q + i \pi\Bigr) 
+
\sum_{\chi(-1) = -1} 
\log\Bigl( \sum_{a=1}^{q-1}\frac{a}{q} \chi(a) \Bigr),
\end{equation}
where the second logarithm is the complex one.
Recalling that $R(q)>0$, it is clear  that 
$r(q)$ is a real number. Hence the 
imaginary part of the sum over the odd Dirichlet characters 
in \eqref{rq-start} must be
equal to $- \pi(q-1)/2$. We obtain 
\begin{equation}
\label{chi-Bernoulli-method-formula2}
r(q)
=  
\frac{q-1}{2} \Bigl( \log \pi - \frac{1}{2} \log q \Bigr) 
+ 
\sum_{\chi (-1) = -1}
\log \ \Bigl\vert
\sum_{a=1}^{q-1}  \frac{a}{q} \chi(a) 
\Bigl\vert.
\end{equation}  

As a possible alternative approach, one can start from 
\eqref{hasse} and use the fact that  
$L(1,\chi)
=
-
\frac{1}{q}
\sum_{a=1}^{q-1} \chi(a)
\digamma \bigl(\frac{a}{q}\bigr), 
$
where $\digamma(x)=(\Gamma^\prime/\Gamma)(x)$ is the \emph{digamma} function.
Very similar computations then lead to
\begin{equation}
\label{Hasse-method-formula2}
r(q)
= 
- 
\frac{(q-1)}{2} \log q 
+
\sum_{\chi (-1) = -1}  
\log \ \Bigl\vert
\sum_{a=1}^{q-1} \chi(a)
\digamma \bigl(\frac{a}{q}\bigr)
\Bigr\vert .
\end{equation}
In practice, though, it is better to use \eqref{chi-Bernoulli-method-formula2}
since no special function computations are needed there;
nevertheless, formula \eqref{Hasse-method-formula2} can be useful to double-check
our results.

The summation over $a$ in \eqref{chi-Bernoulli-method-formula2} 
can be handled using the FFT procedure, paying attention to choose
only the contributions of the odd Dirichlet characters, and we can also
embed here a \emph{decimation in frequency strategy}, see Section \ref{FFT-DIF}, 
as we already did in \cite{Languasco2021a} and \cite{LanguascoR2021}.

The FFT procedure requires $O(q)$ memory positions and
the computation of $r(q)$ via 
\eqref{chi-Bernoulli-method-formula2} has a computational cost of $O(q\log q)$ 
arithmetic operations plus the cost of computing  $(q-1)/2$ values of  the 
logarithm function and products:  so far, this is the fastest known algorithm 
to compute $r(q)$, see, for other less efficient algorithms, 
the works of Fung-Granville-Williams \cite{FungGW1992}, 
Shokrollahi \cite{Shokrollahi1999} and Jha \cite{Jha1995}.

Using this algorithm we were able to obtain more digits of the  
maximal and minimal champions for $R(q)$ with $q<10^{10}$, namely
\[
R(9697282541) =  1.7247411203\dotsc
\quad
\textrm{and}
\quad
R(116827429)  = 0.5756742526\dotsc,
\]
see also Tables \ref{table2}-\ref{table3}.
Further records for $q>10^{10}$, claimed to be correct up to $6$ decimals, 
were obtained in 2021 by Broadhurst \cite{Broadhurst2021}.
He directly evaluated the prime sums in formula \eqref{loggie} using $x$ 
between $10^{15}$ and $10^{21}$; then he 
gave a statistical estimate for the errors in such 
computations. 

Our approach to evaluate the computational error is more classical 
and it is presented in Section \ref{FFT-accuracy}.
We did not replicate Broadhurst's computations for $q>10^{10}$,
because we did not have enough computational resources at our disposal. 
The problem is the huge memory resources the FFT procedure would require in these cases. 
For example, the largest case we were able to handle, $q=9 854 964 401$,
already needed about 3TB of memory using FFTW \cite{FrigoJ2005}, a software library designed to
implement the FFT procedure, with the quadruple precision ($128$ bits) 
of the C programming language.

\subsection{Computations with trivial summing over \texorpdfstring{$a$}{a} (slower, more digits available).}

Unfortunately in PARI/GP \cite{PARI2023} and in \texttt{libpari} the FFT-functions work only if $q=2^t+1$, 
for some $t\in \N$.  So  we had to trivially perform these summations and  hence, in practice,  
this part is the most time consuming one as its computational cost is quadratic in $q$.
Nevertheless, this approach works nicely for small values of $q$ and is able
to provide the values of $R(q)$ which many decimals.

Being aware of such limitations, we performed the 
computation of $r(q)$ and $R(q)$  
for every prime $q$ in the range $3\le q\le 1000$, 
using a precision of $100$ decimals, see Table \ref{table1}. 
We also computed their values for 
$q=1451$, $2741$, $3331$, $4349$, $4391$, $5231$, $6101$, $6379$, $7219$, $8209$, $9049$, $9689$,
see the left part of Table \ref{table2}.
These numbers were chosen to  extend the available decimals for the 
known data (see Fung-Granville-Williams \cite{FungGW1992} and Shokrollahi \cite{Shokrollahi1999}). 
Likewise we also obtained them for 
$q=37189$, $42 611$, $149119$, $198 221$, $305 741$, $401179$;  see the right part of Table \ref{table2}.
For these values of $q$ it became clear that the
bulk of the computation time was spent on summing over $a$,
providing experimental evidence that replacing the trivial way of summing over $a$ 
with the FFT procedure is fundamental to be able to evaluate $r(q)$ and $R(q)$ for larger values of $q$.

\subsection{FFT-Decimation in frequency} 
\label{FFT-DIF} 
We give here a shortened version of the more general argument
presented in \cite[Section~8]{Languasco2023}.
The way to translate  eq.~\eqref{chi-Bernoulli-method-formula2} 
into a problem that can be handled using the FFT procedure is
based on the following remark.
Recalling that $q$ is prime, it is enough to determine a primitive root\footnote{We
recall that finding primitive roots is a computationally hard problem; but
we just need to do this once for each $q$ we will work with.}
$g$ of $q$, to represent the Dirichlet character $\chi_1$ mod $q$ being
uniquely determined by $\chi_1(g) = \e(1/(q-1))$,
where  $\e(x):=\exp(2\pi i x)$.  
Using $\chi_1$ we can 
represent the set of non-trivial characters mod $q$ as $\{\chi_1^j \colon j=1,2,\dotsc,q-2\}$.
Hence, if for every $k\in \{0,\dotsc,q-2\}$, we write $g^k\equiv a_k\in\{1,\dotsc,q-1\}$,
the innermost summation in \eqref{chi-Bernoulli-method-formula2}
is  of the type 
$\sum_{k=0}^{q-2}  
\e( j k /(q-1)) f(a_k/q)$,  $j\in\{1,\dotsc,q-2\}$
is odd  and $f$ is a suitable function. 
As a consequence, such quantities are the Discrete Fourier Transform (DFT)  
of the sequence $\{ f(a_k/q)\colon k=0,\dotsc,q-2\}$. 
This observation is due to Rader \cite{Rader1968} and it was used in 
\cite{FLM,Languasco2021a,Languasco2021b,LanguascoR2021}
to speed up the computation of similar quantities.

In our case we can also use the \emph{decimation in frequency} strategy: following the line 
of reasoning in  \cite[Section~8]{Languasco2023}, letting $\overline{q}=(q-1)/2$
for every $j=0,\dotsc, q-2$, $j=2t+\ell$, $\ell\in\{0,1\}$ and $t\in \Z$, we have that  
\begin{align}
\notag
\sum_{k=0}^{q-2} \e\Bigl(\frac{ j k}{q-1}\Bigr)  f \Bigl(\frac{a_k}{q}\Bigr)
&=
\sum_{k=0}^{\overline{q}-1}  
\,\e\Bigl(\frac{ t k}{\overline{q}}\Bigr)
\,\e\Bigl(\frac{\ell k}{q-1}\Bigr)
\Bigl(
f\Bigl(\frac{a_k}{q}\Bigr) 
+
(-1)^{\ell} 
f \Bigl(\frac{a_{k+\overline{q}}}{q}\Bigr)
\Bigr)
\\&
\label{DIF} 
=
\begin{cases}
\sum\limits_{k=0}^{\overline{q}-1} \,\e\bigl(\frac{t k}{\overline{q}}\bigr) b_k 
& \textrm{if} \ \ell =0;\\[2ex]
\sum\limits_{k=0}^{\overline{q}-1} \,\e \bigl(\frac{t k}{\overline{q}}\bigr)  c_k  
& \textrm{if} \ \ell =1,\\
\end{cases}
\end{align}
where $t=0,\dotsc, \overline{q}-1$,
\[
b_k :=
f\Bigl(\frac{a_k}{q}\Bigr) +  f \Bigl(\frac{a_{k+\overline{q}}}{q}\Bigr)   
\quad
\textrm{and}
\quad
c_k :=  
\e\Bigl(\frac{k}{q-1}\Bigr)   
\Bigl(  f\Bigl(\frac{a_k}{q}\Bigr) -  f \Bigl(\frac{a_{k+\overline{q}}}{q}\Bigr)  \Bigr).
\]
Since we just need the sum  over the odd Dirichlet characters for $f(x)=x$,
instead of computing an FFT  of length $q-1$
we can evaluate  an FFT of half a length, applied on the sequence
$c_k$ defined in \eqref{DIF}.
Clearly this leads to a gain in speed and in a reduction in memory usage.
In our case, using again $\langle g \rangle = (\Z/q\Z)^{*}$, $a_k \equiv g^k \bmod q$ and   $g^{\overline{q}} \equiv q-1 \bmod{q}$,
we can write that $ a_{k+\overline{q}}  \equiv  q-a_{k} \bmod{q}$;  hence
\[
a_k  -a_{k+\overline{q}}  
= 
a_k -(q-a_{k})  
=
2a_k  -q,
\]
so that we obtain $c_k= \e(k / (q-1))(2a_k/q -1)$ for every $k=0,\dotsc, 
\overline{q}-1$, $\overline{q}=(q-1)/2$.

\subsection{FFT accuracy estimate}
\label{FFT-accuracy}
In order to estimate the accuracy in performing the FFT procedure, we have to
recall first the definition of the Discrete Fourier Transform (DFT).
\begin{Defi}[The Discrete Fourier Transform $D$]
Let  $u_k\in \C$, $k=0, \dotsc, N-1$, be a sequence.  We define
the \emph{Discrete Fourier Transform} $D$ of $u_k$ as the following sequence
\[
\bigl( D(u_k)\bigr)_j := \sum_{k=0}^{N-1} u_k  \,\e\Bigl(-\frac{jk}{N}\Bigr) 
\]
where $j=0, \dotsc, N-1$ and  $\e(x)=\exp(2\pi i x)$.
The corresponding \emph{inverse Discrete Fourier Transform} 
$D^{-1}$ is defined as the sequence
\[
\bigl(D^{-1}(u_k) \bigr)_j
= 
\Bigl(
\frac{1}{N}\overline{D(\overline{u_k})}
\Bigr)_j
=
\Bigl(
\frac{1}{N}
\sum_{k=0}^{N-1} u_k  \,\e\Bigl(\frac{jk}{N}\Bigr)
\Bigr)_j,
\]
where $j=0, \dotsc, N-1$.
\end{Defi}
It is not hard to prove that  $D,D^{-1}$ are linear,
$D(D^{-1}(u_k)) = u_k$, $D^{-1}(D(u_k)) =u_k$,
and  $D/\sqrt{N},\sqrt{N}D^{-1}$ are $L^2$-isometries.

We also recall that the Fast Fourier Transform $F$ is an algorithm
that evaluates the Discrete Fourier Transform $D$.
We define analogously the Inverse Fast Fourier Transform $F^{-1}$.

According to Schatzman \cite[\S~3.4, p.~1159-1160]{Schatzman1996},  
the root mean square relative error $\mathcal E$ in the FFT satisfies 
\begin{equation}
\label{Delta-FFT}
\mathcal E=\frac{\Vert F(u_k) - D(u_k)\Vert_2}{\Vert D(u_k) \Vert_2}<\Delta,\text{~with~}\Delta:=0.6 \eps 
\Bigl(\frac{\log N}{\log 2}\Bigr)^{1/2},
\end{equation}
where $\eps$ is the machine epsilon and $N$ is the length of the transform. 
Moreover, the estimate in \eqref{Delta-FFT} holds for both $F^{-1}, D^{-1}$ too.

According to the IEEE 754-2008 specification, we can 
set $\eps=2^{-64}$ for the \emph{long double precision}  ($80$ bits)
of the C programming language. 
So for the largest case we are considering, $q=9854964401$, $N=(q-1)/2$, we get that $\Delta<1.85 \cdot 10^{-19}$. 
To evaluate the euclidean norm of the error we have then to multiply $\Delta$ and the
euclidean norm of 
$x_k:= 2 a_k/q-1$,
where  $a_k  = g^k \bmod q$, $\langle q \rangle$ 
$= (\Z/q\Z)^{*}$.
A straightforward computation gives 
\[
\Vert x_k \Vert_2 = \Bigl(\frac{(q-1)(q-2)}{6q}\Bigr)^{1/2}  
=40527.69505\dotsc
\]
Exploiting \eqref{Delta-FFT} and that  $D/\sqrt{N}$ is an $L^2$-isometry, we also obtain 
\begin{equation}
\label{RMS-FFT}
\Vert F(x_k) - D(x_k)\Vert_2
<
\Delta 
\Vert  
D(x_k)  
\Vert_{2}
=\Delta \sqrt{N}
\Vert  
x_k
\Vert_{2}.
\end{equation}
Recalling that $\Vert \cdot \Vert_{\infty} \le \Vert \cdot \Vert_{2}$ and using \eqref{RMS-FFT},
we can estimate that the maximal error in its FFT-computation
for this sequence is bounded by $ 7.48 \cdot 10^{-15}$ (long double precision case).  

We also estimated \emph{in practice} the accuracy in 
the actual computations  using the FFTW software library by evaluating at   
run-time the quantity
$
E_{j}(x_k) : = 
\Vert F^{-1}(F(x_k)) - x_k \Vert_{j},
$ 
$j\in \{2,\infty \}$.
Note that this quantity becomes zero if we replace $(F, F^{-1})$ by $(D,D^{-1})$.
Moreover, we also remark that 
from \eqref{RMS-FFT} we obtain
\begin{equation}
\label{RMS-FFT-2}
\Vert  
F(x_k)  
\Vert_{2}
\le
\Vert  
F(x_k)  - D(x_k)
\Vert_{2}   
+ 
\Vert  
D(x_k)  
\Vert_{2}
<
(1+\Delta) \Vert  
D(x_k)  
\Vert_{2}
=
(1+\Delta)  \sqrt{N}\Vert  
x_k
\Vert_{2}.
\end{equation}
We also have that
\begin{align}
\notag
E_{2}(x_k) 
&
=
\Vert F^{-1}(F(x_k)) - D^{-1}(D(x_k) )\Vert_{2}
\\
\notag
&
\le
\Vert
F^{-1}(F(x_k)) -  D^{-1}(F(x_k) ) 
\Vert_{2}
+  
\Vert 
D^{-1} [F(x_k)  - D(x_k) ]
\Vert_{2}
\\
\notag
&
<
\Delta
\Vert  D^{-1}(F(x_k) ) 
\Vert_{2}
+  
\frac{1}{\sqrt{N}}
\Vert 
F(x_k)  - D(x_k) 
\Vert_{2} 
\\
\label{back-forth-estim}
&
<
\frac{\Delta}{\sqrt{N}}
\Vert   F(x_k)  
\Vert_{2} 
+  
\Delta
\Vert  
x_k 
\Vert_{2}
<
\Delta(2+\Delta)
\Vert  
x_k 
\Vert_{2}
,
\end{align}
in which we used that $D^{-1}$ is linear 
and $ \sqrt{N}D^{-1}$ is an $L^2$-isometry, \eqref{Delta-FFT} for $F^{-1}$, 
and \eqref{RMS-FFT}-\eqref{RMS-FFT-2}.
Recalling that $\Vert \cdot \Vert_{\infty} \le \Vert \cdot \Vert_{2}\le  \sqrt{N} \Vert \cdot \Vert_{\infty}$, 
we also have $E_{\infty}(x_k) < \Delta(2+\Delta) \sqrt{N} \Vert x_k \Vert_{\infty}$.
For  $q=9854964401$, $N=(q-1)/2$ and $\eps = 2^{-64}$ in \eqref{Delta-FFT},
we get that $\Delta(2+\Delta)< 3.70 \cdot 10^{-19}$
and, using again the previous norm-computation, 
we also obtain that $E_{\infty}(x_k)< 1.50 \cdot 10^{-14}$.
Moreover, the actual computations using FFTW gave the following results:  
\begin{align*}
\frac{E_2(x_k)}{\Vert x_k \Vert_2} < 6.27 \cdot 10^{-19} , \quad
E_\infty(x_k) < 1.72 \cdot 10^{-18} , \quad
\end{align*}
in agreement with   \eqref{back-forth-estim}.

Summarising, we can conclude that at least ten decimals 
of our final results are correct.
If necessary, more accurate results can be obtained using the 
\emph{quadruple precision} ($128$ bits), which 
enables us to choose $\eps = 2^{-113}$ in \eqref{Delta-FFT}
and in the following argument, at the cost of a much slower practical execution. 

A similar, but shorter, analysis of the accuracy of the  
FFT procedure 
is performed in \cite[Section~5.7]{LanguascoR2021}.

\subsection{Comments on the plots and on the histograms} 
The actual values of $r(q)$, $R(q)$ and of all the other 
relevant quantities presented in the herewith included plots and histograms
were obtained for every odd prime $q$ up to $\bound$
using the FFTW \cite{FrigoJ2005} software library
set to work with the \textit{long double precision} (80 bits). 
Such results were then collected in some \emph{comma-separated values} (csv) files and then
all the plots and the histograms were
obtained running on such stored data some suitable designed scripts written
using \pythonversion\ and making use of the packages \pandasversion\ and \matplotlibversion.
The normal function used in the histograms is defined as 
$$\mathcal{N}(x,\mu,\sigma) := \frac{1}{\sigma \sqrt{2\pi}} \exp\Bigl(- \frac{(x-\mu)^2}{2\sigma^2}\Bigr),$$
where $\mu$ and $\sigma$ 
denote the mean, respectively  the
standard deviation, of the plotted data.

Figures \ref{fig1} shows some values for $R(q)$.
The first important remark is that Figure \ref{fig2}
shows that  $r(q)$ has a symmetrical distribution having average equal to $0$;
this symmetry is not shown by the known theoretical results.
The frequent values of $r(q)$ are rational numbers $r$ for 
which the smallest set $\mathcal A$ with $\mu(\mathcal A)=2r$ has very few elements. 
This explains the ``concentration'' of the computed data
around the values $\pm1/4$
(with $\mu(\{2\})=\frac12$
and primes $q$ such that
$2q\pm1$ contributing, see Figure \ref{fig3}).
Moreover, the peak
around the values $\pm1/8$ depends on 
the contribution of the primes $q$ such that
$4q\pm1$ are primes.
In both cases, considering only the contribution of the primes 
$q$ such that
$2q\pm 1$ and  $4q\pm 1$ are composite produces distributions
very similar to the normal one, see Figures \ref{fig4}. 

The numerical results mentioned, the plots and the histograms,
and the programs used to obtain $R(q)$ and $r(q)$, are available
at \url{https://www.math.unipd.it/~languasc/rq-comput-reprise.html}

\medskip
\noindent \textbf{Acknowledgment}. 
Part of the work was done during the postdocs and multiple visits of the
first, fourth and fifth author
at the Max Planck Institute for Mathematics (MPIM) under the
mentorship of third author. They thank MPIM for the invitations, 
the hospitality of the staff and the excellent working conditions.
The fourth author is supported by the Austrian Science Fund (FWF): P 35863-N.
The computational work was carried out on machines
of the cluster located at the Dipartimento di Matematica ``Tullio Levi-Civita'' of 
the University of Padova, see \url{https://hpc.math.unipd.it}. 
The authors are grateful for having had such computing facilities 
at their disposal. 
The fifth author wishes to thank the INI and LMS for the financial support.

\begin{table}[htp]
\scalebox{0.625}{
\begin{tabular}{|c|c|}
\hline
$q$  & $R(q)$\\ \hline
$3$ & $0.6045997880780726168646927525473$ \\ \hline
$5$ & $0.7895683520871486895067592799900$\\ \hline
$7$ & $0.9566751857508418754795073381317$\\ \hline
$11$ & $1.1091619128700057589698217531662$\\ \hline
$13$ & $1.0771490562098575674859781589187$\\ \hline
$17$ & $0.8553903456876526811590587393660$\\ \hline
$19$ & $0.7070400490038472907067462197858$\\ \hline
$23$ & $1.2730306993968550223440516296068$\\ \hline
$29$ & $1.1950722585472314170213869230139$\\ \hline
$31$ & $0.8898896210785440789198518157132$\\ \hline
$37$ & $0.8961735424518262426393010568398$\\ \hline
$41$ & $1.0109514928155133737670365161798$\\ \hline
$43$ & $1.0003280708398792157908433519393$\\ \hline
$47$ & $0.9951041947584376332046179459764$\\ \hline
$53$ & $1.0023154955608046980883540349743$\\ \hline
$59$ & $1.0311199595775858834174986891680$\\ \hline
$61$ & $0.9154168975763615203860784058478$\\ \hline
$67$ & $1.0323019630420196815155397633286$\\ \hline
$71$ & $0.9465247471036236809290054627120$\\ \hline
$73$ & $1.2821779323076053838224676118514$\\ \hline
$79$ & $0.8457945961200297550455294076382$\\ \hline
$83$ & $1.2232692654844146161950762139016$\\ \hline
$89$ & $1.2863214746192234623445369458997$\\ \hline
$97$ & $0.9046761428702376506678185793342$\\ \hline
$101$ & $1.1104995875358644805192388808229$\\ \hline
$103$ & $1.0556519883371874318616371348168$\\ \hline
$107$ & $0.9926076779267250130951961237566$\\ \hline
$109$ & $0.9155428388523018685066750024637$\\ \hline
$113$ & $1.1618557363506180805776111458998$\\ \hline
$127$ & $1.0626983549971763540798019088845$\\ \hline
$131$ & $1.2789769938976286727059298824683$\\ \hline
$137$ & $1.0018885365042079285157114283333$\\ \hline
$139$ & $0.8716611518739232788670854213024$\\ \hline
$149$ & $1.0488652764269119456479100644937$\\ \hline
$151$ & $1.0961352605053081203560323292152$\\ \hline
$157$ & $0.7430450532910889660052300286210$\\ \hline
$163$ & $0.9516739236944299288308183830698$\\ \hline
$167$ & $0.8540489171409883518683860745104$\\ \hline
$173$ & $1.2575031110060486325647665223234$\\ \hline
$179$ & $1.3189895521869900854067212054754$\\ \hline
$181$ & $1.0164672530790178324085643879748$\\ \hline
$191$ & $1.2985095534724676367615527171504$\\ \hline
$193$ & $1.1738495661428052368362517610841$\\ \hline
$197$ & $0.8714268580587022585427508674145$\\ \hline
$199$ & $0.7977576598180326170333641097002$\\ \hline
$211$ & $0.7096581038457700773915382688127$\\ \hline
$223$ & $0.9001673677400910738942007486095$\\ \hline
$227$ & $0.7629883976313712260376287117080$\\ \hline
$229$ & $0.7241457414201049462008640419682$\\ \hline
$233$ & $1.4310221673105806346958377026375$\\ \hline
$239$ & $1.1852025922101838102852657887109$\\ \hline
$241$ & $1.1190819269965132548112076907794$\\ \hline
$251$ & $1.1804169442539285917038758350886$\\ \hline
$257$ & $0.9055962573549657664091346453876$\\ \hline
$263$ & $0.9371707816685296065406493231972$\\ \hline
$269$ & $1.0105242994134286604110488301351$\\ 
\hline
\end{tabular}
}
\scalebox{0.625}{
\begin{tabular}{|c|c|}
\hline
$q$  & $R(q)$\\ \hline
$271$ & $0.8412088090144110303458717890667$\\ \hline
$277$ & $1.2228716770080365999632534704580$\\ \hline
$281$ & $1.0907231267144641150745775682060$\\ \hline
$283$ & $0.9873004592498935117673519297087$\\ \hline
$293$ & $1.2884302359523728319105679845501$\\ \hline
$307$ & $0.9135872522019948222051491689937$\\ \hline
$311$ & $1.1458937454264730221344414268718$\\ \hline
$313$ & $0.9389331767581916618067398442288$\\ \hline
$317$ & $0.8067182318898481284945719857774$\\ \hline
$331$ & $0.8135627495605184590233164933650$\\ \hline
$337$ & $0.8611151152192259126883225579098$\\ \hline
$347$ & $1.0851794175810526744648331305833$\\ \hline
$349$ & $0.9839573134487701044559123913262$\\ \hline
$353$ & $0.8860350566174460450308781577592$\\ \hline
$359$ & $1.1600264444670825456691643273527$\\ \hline
$367$ & $0.9086410187793691206326531982541$\\ \hline
$373$ & $1.0750761442013325764626703553466$\\ \hline
$379$ & $0.7214461864713844469442699568877$\\ \hline
$383$ & $0.8324380926742871396047076038085$\\ \hline
$389$ & $0.8499778289685450397162756349668$\\ \hline
$397$ & $0.9975778112015857909425324661679$\\ \hline
$401$ & $1.1399832831644707063138427893128$\\ \hline
$409$ & $1.1991980974390954074874424479768$\\ \hline
$419$ & $1.1897445888237693592676697100177$\\ \hline
$421$ & $0.8645796653071174117734286953546$\\ \hline
$431$ & $1.1375426110359346246171708562349$\\ \hline
$433$ & $1.0717613518204177138545059520477$\\ \hline
$439$ & $0.6848413406172976205500589562641$\\ \hline
$443$ & $1.4108998843039798698090656834498$\\ \hline
$449$ & $0.9053964365861442489589154746074$\\ \hline
$457$ & $0.8373463419058562177863679134357$\\ \hline
$461$ & $1.0311955737739740364528472490666$\\ \hline
$463$ & $0.9613462511195984177868663523170$\\ \hline
$467$ & $0.8974045485919283687065708373771$\\ \hline
$479$ & $1.1050671578064206970591097893948$\\ \hline
$487$ & $1.1304102278265606313945369715559$\\ \hline
$491$ & $1.2722146569130496835275435498496$\\ \hline
$499$ & $0.8297902495946506366988138268051$\\ \hline
$503$ & $1.0995617471957832909336221046588$\\ \hline
$509$ & $1.3969208271961266132041741065091$\\ \hline
$521$ & $0.7448857918191827286091015924803$\\ \hline
$523$ & $0.9951484787399289420380269322079$\\ \hline
$541$ & $0.9447265578295298152134577952949$\\ \hline
$547$ & $0.7386850547619545899616661320191$\\ \hline
$557$ & $1.0180061813097044024347867514026$\\ \hline
$563$ & $0.9232212509133752364416200184615$\\ \hline
$569$ & $0.8664438451435738527270516828484$\\ \hline
$571$ & $0.9966248063685197276230915134980$\\ \hline
$577$ & $0.9137029380401851023927738920458$\\ \hline
$587$ & $0.8125245985067212166037417395454$\\ \hline
$593$ & $1.0773461748966493078075918844172$\\ \hline
$599$ & $0.9640877383472306977957126847174$\\ \hline
$601$ & $0.9282733975182409725085430055023$\\ \hline
$607$ & $0.8363731270525144324766779910174$\\ \hline
$613$ & $0.8770365930347214891035502029408$\\ \hline
$617$ & $0.8424608454194671614144537884806$\\ 
\hline
\end{tabular}
}
\scalebox{0.625}{
\begin{tabular}{|c|c|}
\hline
$q$  & $R(q)$\\ \hline
$619$ & $0.8046391863654823181809704923832$\\ \hline
$631$ & $1.1396469807244276647958444773064$\\ \hline
$641$ & $1.3429915643232847544526367324545$\\ \hline
$643$ & $1.0183620561136068530441755349378$\\ \hline
$647$ & $0.9023366731711887559549077220931$\\ \hline
$653$ & $1.2708772780577246646879609833837$\\ \hline
$659$ & $1.3910631789822655014399826852608$\\ \hline
$661$ & $0.8354443097523214656936838597289$\\ \hline
$673$ & $1.0366020698239863718118735321088$\\ \hline
$677$ & $0.9242401331249736440179204466235$\\ \hline
$683$ & $1.1352828140240947699825469113423$\\ \hline
$691$ & $0.7692142795745405069640641103691$\\ \hline
$701$ & $0.9208988286796986104162625438273$\\ \hline
$709$ & $1.0564893491780186160617480034127$\\ \hline
$719$ & $1.2030632585533392768111724372893$\\ \hline
$727$ & $0.9985692142278032863134063960758$\\ \hline
$733$ & $0.9801491017726198673607802262180$\\ \hline
$739$ & $1.1026354682405308663067124546439$\\ \hline
$743$ & $1.0349549409620577590409117683058$\\ \hline
$751$ & $1.0185620058358507387809584897536$\\ \hline
$757$ & $0.9670687611870859854554145544772$\\ \hline
$761$ & $1.4695828581314155249132265698413$\\ \hline
$769$ & $0.8989223036739211131497271647474$\\ \hline
$773$ & $1.0681094719703144713033330503520$\\ \hline
$787$ & $0.9717823284398633668645155647478$\\ \hline
$797$ & $1.0307513038736094294364198652655$\\ \hline
$809$ & $1.3197044140601871225194956764480$\\ \hline
$811$ & $0.8028381726481542070785681890576$\\ \hline
$821$ & $1.0652843703654964331935281465584$\\ \hline
$823$ & $0.9676931847618204865646570591849$\\ \hline
$827$ & $0.8655599367575844205769196995319$\\ \hline
$829$ & $0.8225003354161554974840091964490$\\ \hline
$839$ & $0.9187109054076576161004431766675$\\ \hline
$853$ & $1.0822358288025334754800428361362$\\ \hline
$857$ & $1.0507531149069469457639638202726$\\ \hline
$859$ & $0.8808009418056817847639767572161$\\ \hline
$863$ & $1.0569423120644476418024040128851$\\ \hline
$877$ & $0.7228939852270574121828463785409$\\ \hline
$881$ & $1.0973899419907535018443533637062$\\ \hline
$883$ & $1.1331822763939321498203901268480$\\ \hline
$887$ & $0.9691797419679082310841771993673$\\ \hline
$907$ & $0.9026255886631148047805162360749$\\ \hline
$911$ & $1.0779855753630487309935104370070$\\ \hline
$919$ & $1.0400334655419995090131730345736$\\ \hline
$929$ & $1.0441490445298916774481320172549$\\ \hline
$937$ & $0.9001793485775001978413226252376$\\ \hline
$941$ & $1.0940086717975223552339721484683$\\ \hline
$947$ & $1.2258744827051074302609149043348$\\ \hline
$953$ & $1.1608317303128388568222684560130$\\ \hline
$967$ & $0.7286000440466886148143682504791$\\ \hline
$971$ & $1.0793911591644004625871038160386$\\ \hline
$977$ & $0.8389088588037128235412547247521$\\ \hline
$983$ & $0.7886767720297385404724656676372$\\ \hline
$991$ & $0.9094393615350512976006963975090$\\ \hline
$997$ & $0.8557575449135065446654521786495$\\ \hline
\phantom{} & \phantom{}  \\ 
\hline
\end{tabular}
}
\caption{\label{table1}
Values of $R(q)$ (truncated) for every odd prime up to $1000$.
}
\end{table}  

\clearpage
\begin{table}[H]
\hskip-1cm
\scalebox{0.65}{ 
\begin{minipage}{0.55\textwidth} 
\begin{tabular}{|c|c|}
\hline   
$q$  & $R(q)$ \\  \hline 
$1451$ & $1.489316072080934425611321346752\dotsc$ \\ \hline 
$2741$ & $1.498121015176665823721124535220\dotsc$ \\ \hline 
$3331$ & $0.642429297634719506688741152270\dotsc$ \\ \hline 
$4349$ & $1.518570512426339397454202981116\dotsc$ \\ \hline  
$4391$ & $1.507776410131052825600361832032\dotsc$ \\ \hline
$5231$ & $1.556562247546690554629305894110\dotsc$ \\ \hline
$6101$ & $1.511405291132409881116244836469\dotsc$ \\ \hline 
$6379$ & $0.673523026278795404982148735902\dotsc$ \\ \hline
$7219$ & $0.658084090096317378291742795450\dotsc$ \\ \hline 
$8209$ & $0.672045039003857595919734222943\dotsc$ \\ \hline 
$9049$ & $0.667614244171116232015569216575\dotsc$ \\ \hline 
$9689$ & $1.524371504087494924535704793958\dotsc$ \\  
\hline
\end{tabular}
\end{minipage}
\begin{minipage}{0.55\textwidth} 
\begin{tabular}{|r|l|}  
\hline 
$q$ \phantom{012}  &\phantom{0123456789012} $R(q)$  \\
\hline
$4391$ & $1.507776410131052825600361832032\dotsc$\\\hline
$5231$ & $1.556562247546690554629305894110\dotsc$ \\ \hline
$42 611$&  $1.619906571157532399867361172777\dotsc$ \\ \hline
$198 221$  &  $1.623477270751197661500864242418\dotsc$ \\ \hline
$305 741$  &   $1.661436485908786948688528096415\dotsc$ \\ \hline
\hline
$6 766 811$ &$1.7093790418\dotsc$ \\ \hline
$1 326 662 801$ & $1.7097585606\dotsc$ \\ \hline
$1 979 990 861$ & $1.7207910074\dotsc$ \\ \hline
$4 735 703 723$ & $1.7216545866\dotsc$ \\ \hline
$9 697 282 541$ & $1.7247411203\dotsc$ \\
\hline
\end{tabular} 
\vspace{0.8cm}
\end{minipage}
}
\caption{\label{table2}
On the left: few other values of $R(q)$. On the right: maximal champions for $R(q)$. 
The values for $q \le 305 741$  were obtained using PARI/GP with a trivial summation
over $a$ and an accuracy of $100$ decimals; the others using the FFTW software library ($128$ bits accuracy). 
} 
\end{table} 

\begin{table}[H]
\begin{center}
\scalebox{0.75}{
\begin{tabular}{|r|l|}  
\hline 
$q$ \hskip0.3truecm\mbox{}  & \hskip0.75truecm\mbox{} $R(q)$   \\
\hline
$37 189$  &  $0.625231255787654795233417601859\dotsc$ \\ \hline
$149 119$ &  $0.624149715978401425409347395847\dotsc$ \\ \hline
$401 179$ &  $0.621507092276527124572758370995\dotsc$ \\ \hline
\hline
$2 083 117$   & $0.6142798512\dotsc$ \\ \hline
$5 589 169$   & $0.5869729849\dotsc$ \\ \hline
$102 598 099$ & $0.5861372431\dotsc$ \\ \hline
$116 827 429$ & $0.5756742526\dotsc$ \\ 
\hline
\end{tabular} 
}
\caption{\label{table3}
Minimal champions for $R(q)$.
The values for $q \le 401 179$  were obtained using PARI/GP with a trivial summation
over $a$  and an accuracy of $100$ decimals; the others using the FFTW software library  ($128$ bits accuracy). 
}
\end{center}
\end{table}

\begin{figure}[H]
\includegraphics[scale=0.5,angle=0]{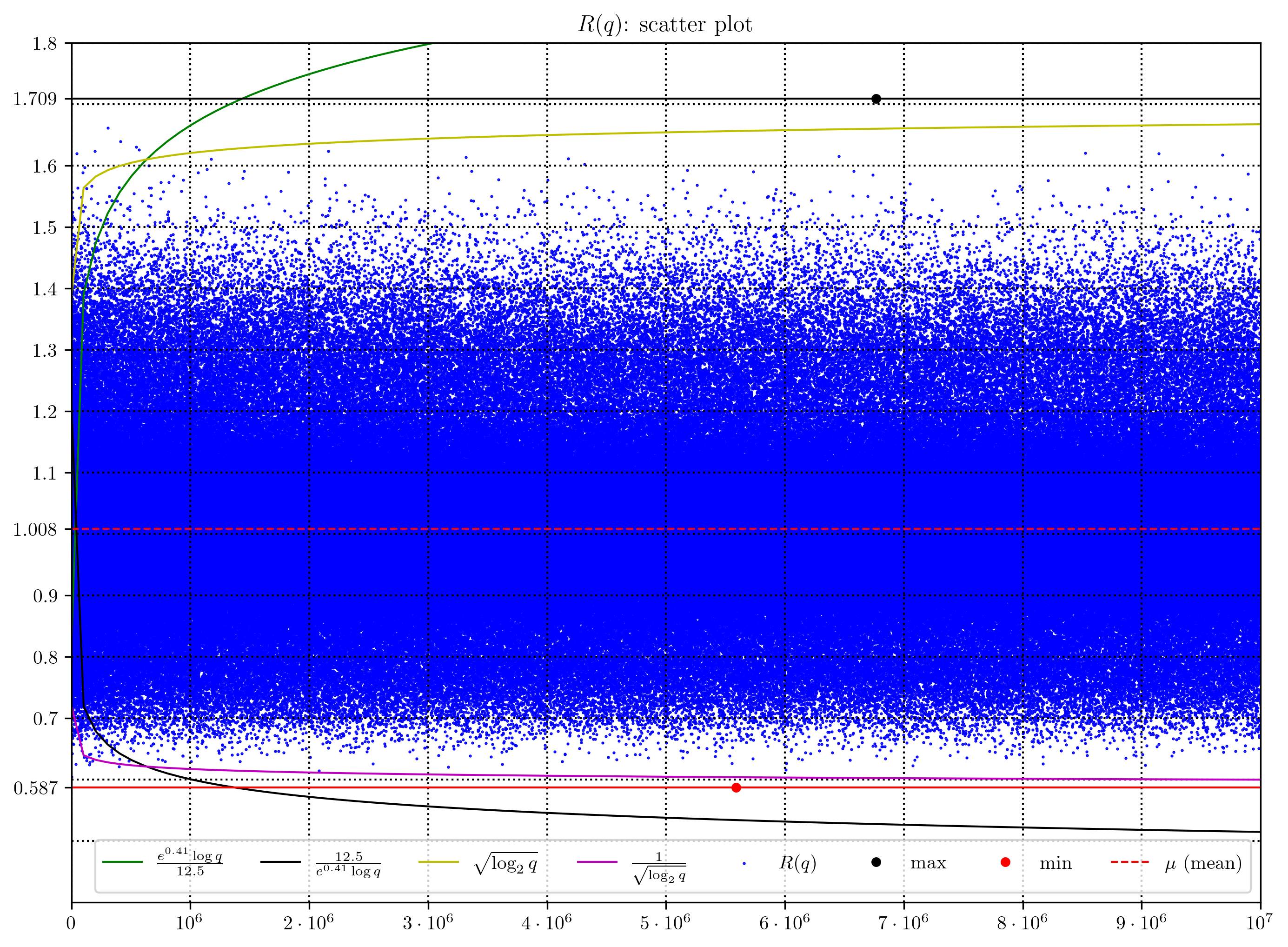} 
\caption{{\small The values of $R(q)$, $q$ prime, $3\le q\le  \bound$.
The red dashed line represents the mean value.
}}
\label{fig1}
\end{figure}

\begin{figure}[H]
\hskip-1.25cm
\scalebox{0.85}{
\begin{minipage}{0.48\textwidth} 
\includegraphics[scale=0.35,angle=0]{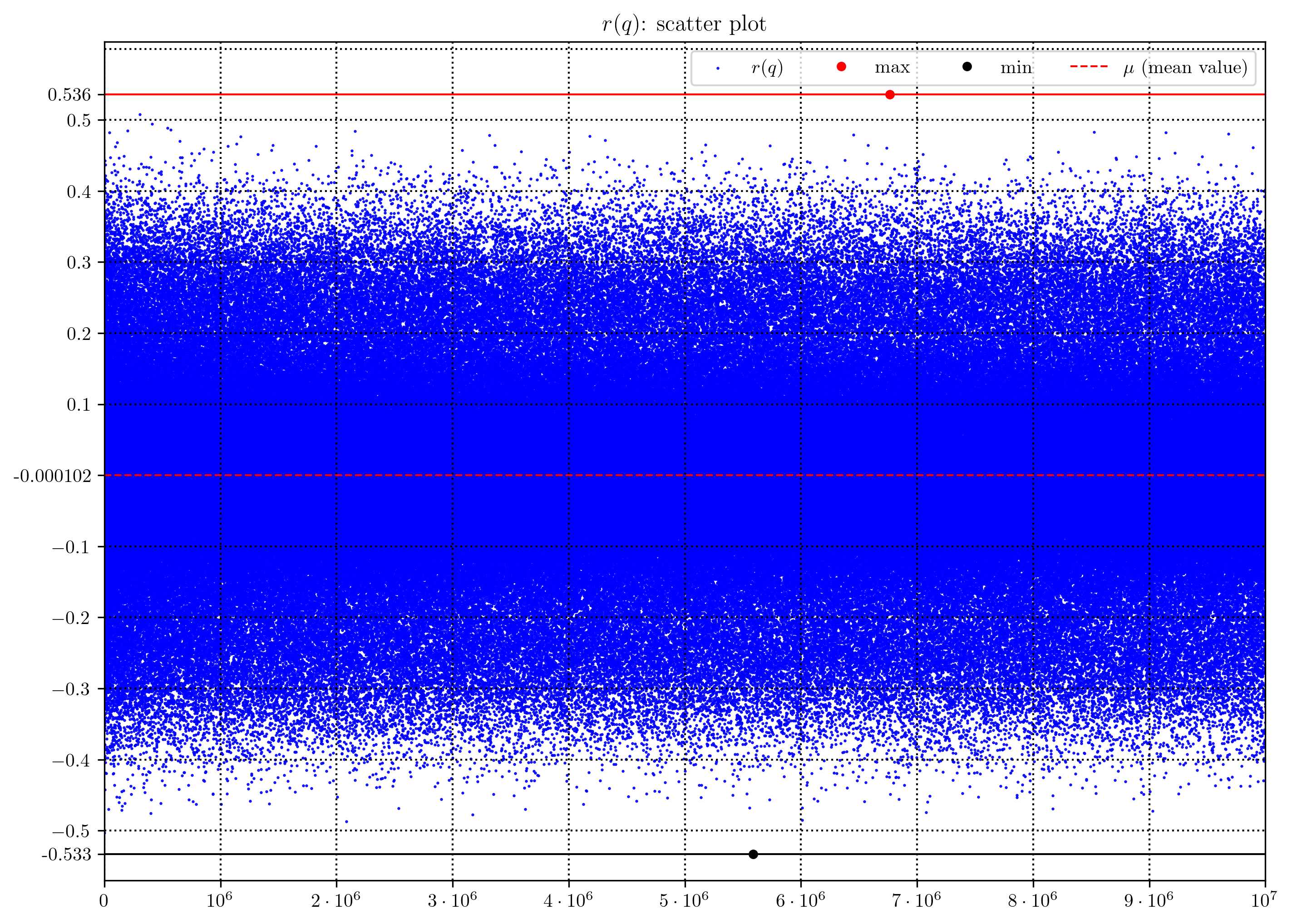}  
\end{minipage} 
\hskip1.25cm
\begin{minipage}{0.5\textwidth} 
\includegraphics[scale=0.56,angle=0]{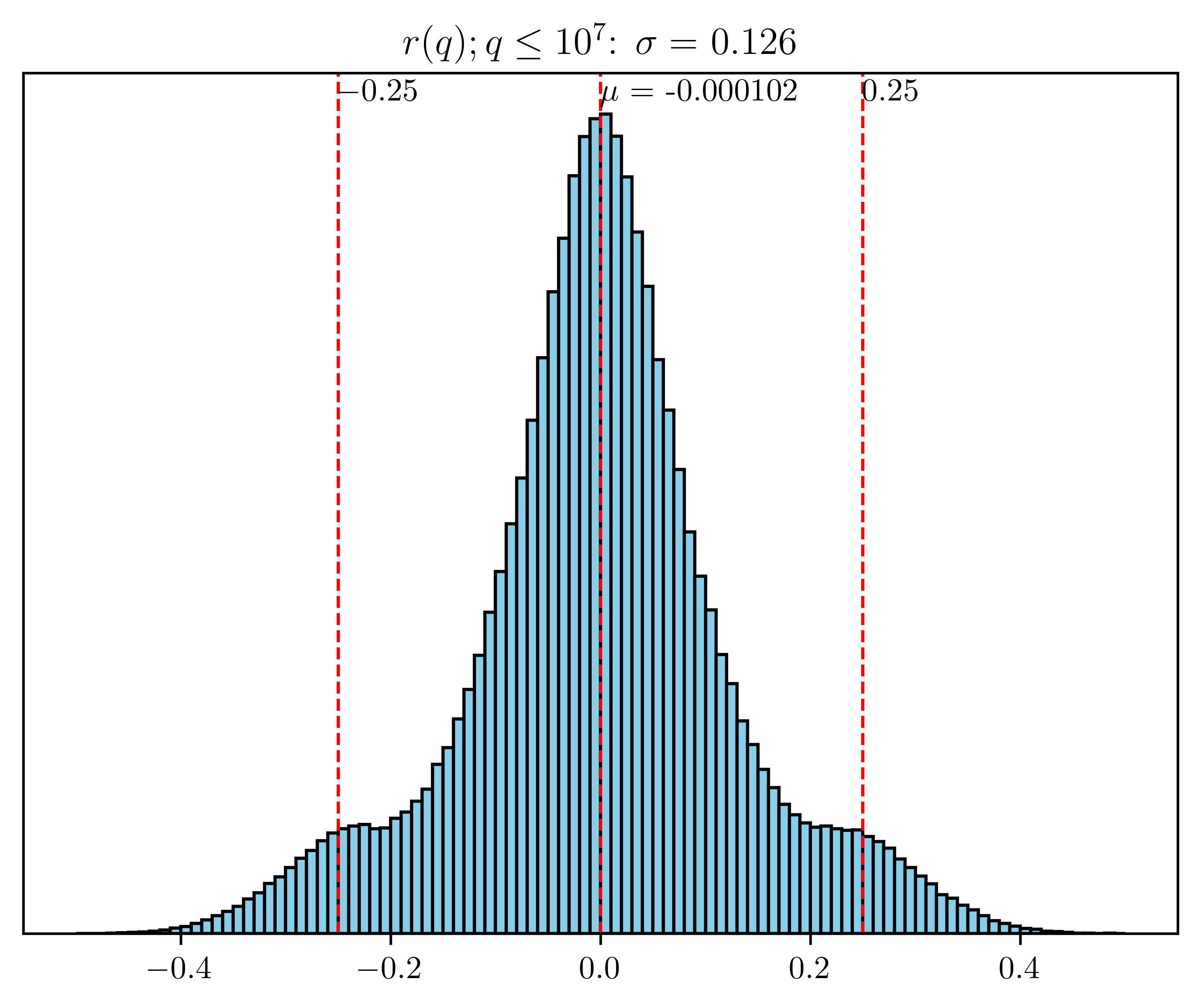} 
\end{minipage} 
}
\caption{{\small On the left: the values of $r(q)$;
}}
\label{fig2}
\end{figure}

\begin{figure}[H]
\scalebox{0.85}{
\begin{minipage}{0.48\textwidth} 
\includegraphics[scale=0.56,angle=0]{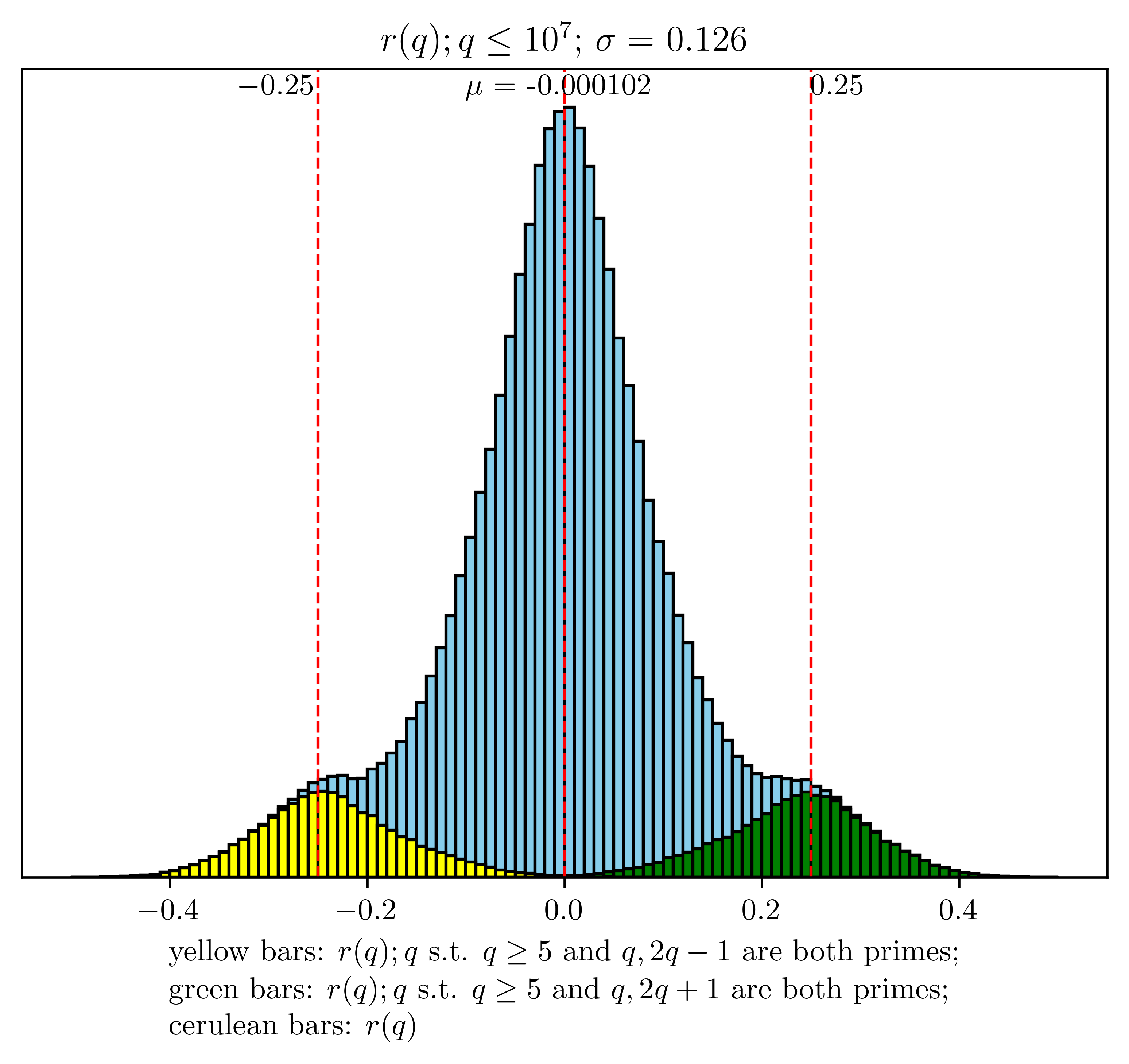}  
\end{minipage} 
\begin{minipage}{0.48\textwidth} 
\includegraphics[scale=0.56,angle=0]{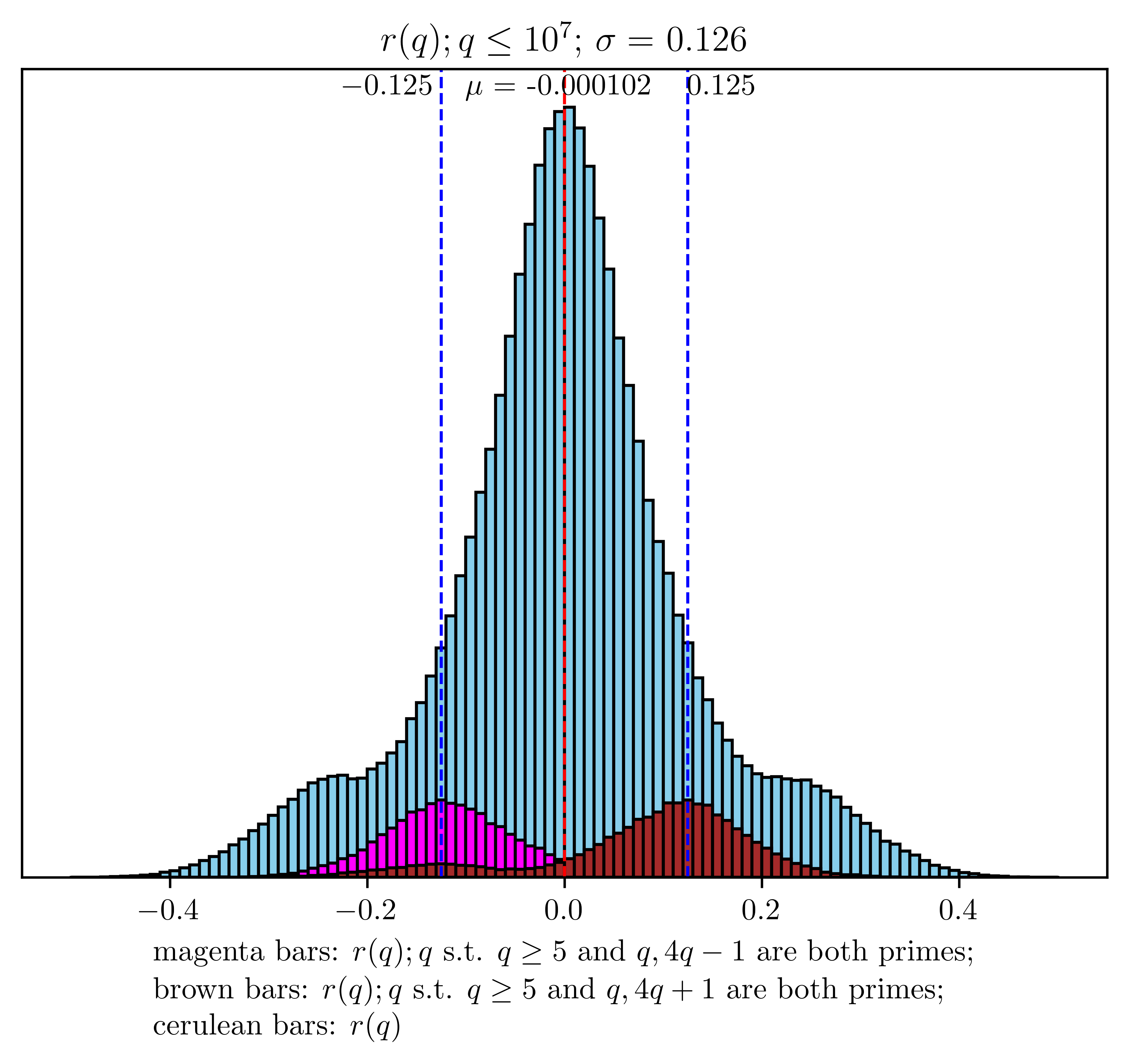}  
\end{minipage} 
}
\caption{On the left: the same histograms of Figure \ref{fig2} but the contributions of the primes $q$ such that 
$2q+1$ is prime or $2q-1$ is prime (the ``spikes'') are superimposed.
On the right: the contributions of the primes $q$ such that 
$4q+1$ is prime or $4q-1$ is prime (the ``spikes'') are superimposed.
}
\label{fig3}
\end{figure}  

 \begin{figure}[H]
 \scalebox{0.85}{
\begin{minipage}{0.48\textwidth} 
\includegraphics[scale=0.56,angle=0]{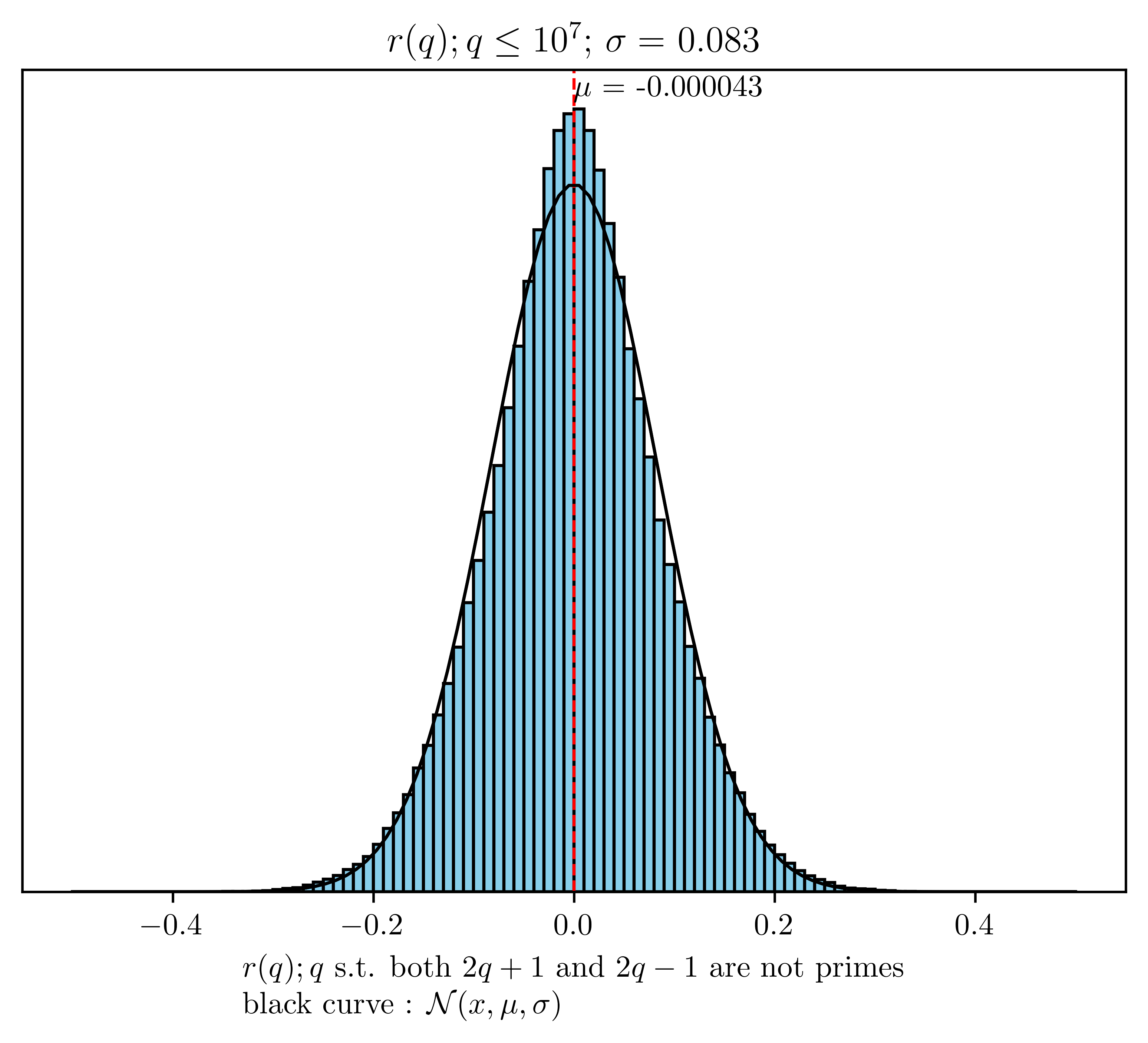}
\end{minipage} 
\begin{minipage}{0.48\textwidth} 
\includegraphics[scale=0.56,angle=0]{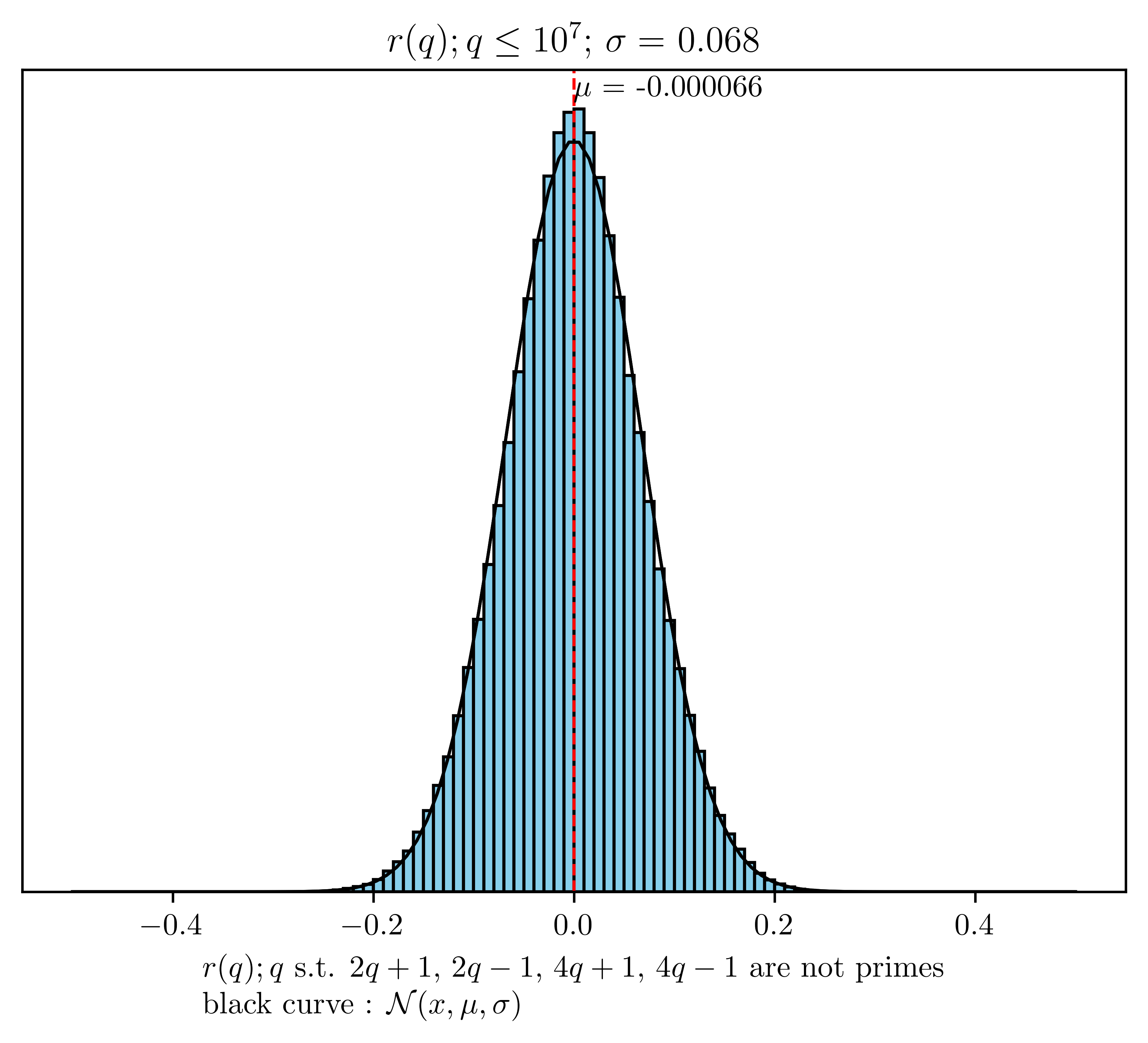}
\end{minipage} 
}
\caption{On the left: the histogram for $r(q)$, $q$ prime, $5\le q\le  \bound$, such that
$2q\pm1$ are composite; on the right: the same with both $2q\pm1$ and $4q\pm1$ that are composite numbers.
The red dashed lines represent the mean values.}
\label{fig4}
\end{figure}


\begin{thebibliography}{10}

\bibitem{AC}
N.~C. Ankeny and S.~Chowla.
\newblock The class number of the cyclotomic field.
\newblock {\em Proc. Nat. Acad. Sci. U.S.A.}, 35:529--532, 1949.

\bibitem{Broadhurst2021}
D.~J. Broadhurst.
\newblock Kummer ratio champions with $p < 10^{12}$.
\newblock \url{https://oeis.org/A309520}, July 22nd, 2021.

\bibitem{Carlitz61}
L.~Carlitz.
\newblock A generalization of {M}aillet's determinant and a bound for the first
  factor of the class number.
\newblock {\em Proc. Amer. Math. Soc.}, 12:256--261, 1961.

\bibitem{CO}
L.~Carlitz and F.~R. Olson.
\newblock Maillet's determinant.
\newblock {\em Proc. Amer. Math. Soc.}, 6:265--269, 1955.

\bibitem{Cohen2007}
H.~Cohen.
\newblock {\em {Number} {Theory}. {Volume II}: {Analytic} and {Modern}
  {Tools}}, volume 240 of {\em Graduate Texts in Mathematics}.
\newblock Springer, 2007.

\bibitem{CojocaruM2006}
A.~Cojocaru and M.~R. Murty.
\newblock {\em An {I}ntroduction to {S}ieve {M}ethods and {T}heir
  {A}pplications}, volume~66.
\newblock Cambridge University Press, Cambridge, 2006.

\bibitem{CG}
E.~S. Croot, III and A.~Granville.
\newblock Unit fractions and the class number of a cyclotomic field.
\newblock {\em J. London Math. Soc. (2)}, 66(3):579--591, 2002.

\bibitem{Davenport}
H.~Davenport.
\newblock {\em {Multiplicative Number Theory}}, volume~74 of {\em Graduate
  Texts in Mathematics}.
\newblock Springer-Verlag, New York, third edition, 2000.
\newblock Revised and with a preface by Hugh L. Montgomery.

\bibitem{deb}
K.~Debaene.
\newblock The first factor of the class number of the {$p$}-th cyclotomic
  field.
\newblock {\em Arch. Math. (Basel)}, 102(3):237--244, 2014.

\bibitem{ElliottH1968/69}
P.~Elliott and H.~Halberstam.
\newblock A conjecture in prime number theory.
\newblock {\em Symp. Math.}, 4:59--72, 1968/69.

\bibitem{Feng}
K.~Q. Feng.
\newblock On the first factor of the class number of a cyclotomic field.
\newblock {\em Proc. Amer. Math. Soc.}, 84(4):479--482, 1982.

\bibitem{FLM}
K.~Ford, F.~Luca, and P.~Moree.
\newblock Values of the {E}uler {$\phi$}-function not divisible by a given odd
  prime, and the distribution of {E}uler-{K}ronecker constants for cyclotomic
  fields.
\newblock {\em Math. Comp.}, 83(287):1447--1476, 2014.

\bibitem{FriedlanderI1997}
J.~Friedlander and H.~Iwaniec.
\newblock The {B}run-{T}itchmarsh theorem.
\newblock In {\em Analytic number theory ({K}yoto, 1996)}, volume 247 of {\em
  London Math. Soc. Lecture Note Ser.}, pages 85--93. Cambridge Univ. Press,
  Cambridge, 1997.

\bibitem{FrigoJ2005}
M.~Frigo and S.~Johnson.
\newblock {T}he {D}esign and {I}mplementation of {FFTW}3.
\newblock {\em Proceedings of the IEEE}, 93:216--231, 2005.
\newblock Available from \url{https://www.fftw.org}.

\bibitem{FungGW1992}
G.~Fung, A.~Granville, and H.~Williams.
\newblock Computation of the first factor of the class number of cyclotomic
  fields.
\newblock {\em J. Number Theory}, 42(3):297--312, 1992.

\bibitem{Gr}
A.~Granville.
\newblock On the size of the first factor of the class number of a cyclotomic
  field.
\newblock {\em Invent. Math.}, 100(2):321--338, 1990.

\bibitem{Guo}
X.~Guo.
\newblock Determinants of trigonometric functions and class numbers.
\newblock {\em Linear Algebra Appl.}, 653:33--43, 2022.

\bibitem{HardyL1923}
G.~Hardy and J.~Littlewood.
\newblock Some problems of `{P}artitio numerorum'; {III}: {O}n the expression
  of a number as a sum of primes.
\newblock {\em Acta Math.}, 44:1--70, 1923.

\bibitem{Hasse}
H.~Hasse.
\newblock {\em \"{U}ber die {K}lassenzahl abelscher {Z}ahlk\"{o}rper}.
\newblock Akademie-Verlag, Berlin, 1952.

\bibitem{Jha1995}
V.~Jha.
\newblock Faster computation of the first factor of the class number of {${\bf
  Q}(\zeta_p)$}.
\newblock {\em Math. Comp.}, 64(212):1705--1710, 1995.


\bibitem{Full-K-ratio}
N. Kandhil, A. Languasco and P. Moree.
 \newblock The Brauer-Siegel ratio for prime cyclotomic fields.
{\href{https://arxiv.org/abs/2402.13830}{arXiv:2402.13830}}, 2024.
  
  
\bibitem{Kummer1851}
E.~E. Kummer.
\newblock Memoire sur la th\'eorie des nombres complexes compos\'ees de racines
  de l'unit\'e et des nombres entiers.
\newblock {\em J. Math. Pures Appl.}, pages 377--498, 1851.
\newblock \textit{Collected Works}, Vol. I, 1975, pp. 363--484.


\bibitem{Languasco2021a}
A.~Languasco.
\newblock Efficient computation of the {E}uler-{K}ronecker constants of prime
  cyclotomic fields.
\newblock {\em Res. Number Theory}, 7(1):Paper No. 2, 22, 2021.

\bibitem{Languasco2021b}
A.~Languasco.
\newblock Numerical verification of {L}ittlewood's bounds for {$\vert
  L(1,\chi)\vert$}.
\newblock {\em J. Number Theory}, 223:12--34, 2021.

\bibitem{Languasco2023}
A.~Languasco.
\newblock A unified strategy to compute some special functions of
  number-theoretic interest.
\newblock {\em J. Number Theory}, 247:118--161, 2023.

\bibitem{earlierwork}
A.~Languasco, P.~Moree, S.~Saad~Eddin, and A.~Sedunova.
\newblock Computation of the {K}ummer ratio of the class number for prime
  cyclotomic fields.
\newblock \url{https://doi.org/10.48550/arXiv.1908.01152}.

\bibitem{LanguascoR2021}
A.~Languasco and L.~Righi.
\newblock A fast algorithm to compute the {R}amanujan-{D}eninger gamma function
  and some number-theoretic applications.
\newblock {\em Math. Comp.}, 90(332):2899--2921, 2021.

\bibitem{LuZhang}
Y.~Lu and W.~Zhang.
\newblock On the {K}ummer conjecture.
\newblock {\em Acta Arith.}, 131(1):87--102, 2008.

\bibitem{MasMon}
J.~M. Masley and H.~L. Montgomery.
\newblock Cyclotomic fields with unique factorization.
\newblock {\em J. Reine Angew. Math.}, 286(287):248--256, 1976.

\bibitem{Maynard2013}
J.~Maynard.
\newblock On the {Brun-Titchmarsh} theorem.
\newblock {\em Acta Arith.}, 157(3):249--296, 2013.

\bibitem{Mets72}
T.~Mets\"{a}nkyl\"{a}.
\newblock On the growth of the first factor of the cyclotomic class number.
\newblock {\em Ann. Univ. Turku. Ser. A I}, 155:12 p., 1972.

\bibitem{Mets74}
T.~Mets\"{a}nkyl\"{a}.
\newblock Class numbers and {$\mu $}-invariants of cyclotomic fields.
\newblock {\em Proc. Amer. Math. Soc.}, 43:299--300, 1974.

\bibitem{MVsieve}
H.~L. Montgomery and R.~C. Vaughan.
\newblock The large sieve.
\newblock {\em Mathematika}, 20:119--134, 1973.

\bibitem{Motohashi1979}
Y.~Motohashi.
\newblock A note on {S}iegel's zeros.
\newblock {\em Proc. Japan Acad. Ser. A Math. Sci.}, 55(5):190--191, 1979.

\bibitem{MP}
M.~R. Murty and Y.~N. Petridis.
\newblock On {K}ummer's conjecture.
\newblock {\em J. Number Theory}, 90(2):294--303, 2001.

\bibitem{Puchta}
J.-C. Puchta.
\newblock On the class number of {$p$}-th cyclotomic field.
\newblock {\em Arch. Math. (Basel)}, 74(4):266--268, 2000.

\bibitem{Rader1968}
C.~Rader.
\newblock {Discrete Fourier transforms when the number of data samples is
  prime}.
\newblock {\em Proc. IEEE}, 56:1107--1108, 1968.

\bibitem{Ramare2009}
O.~Ramar\'{e}.
\newblock {\em Arithmetical aspects of the large sieve inequality}, volume~1 of
  {\em Harish-Chandra Research Institute Lecture Notes}.
\newblock Hindustan Book Agency, New Delhi, 2009.
\newblock With the collaboration of D. S. Ramana.

\bibitem{Schatzman1996}
J.~Schatzman.
\newblock Accuracy of the discrete {F}ourier transform and the fast {F}ourier
  transform.
\newblock {\em SIAM J. Sci. Comput.}, 17(5):1150--1166, 1996.

\bibitem{SelbergCollected2}
A.~Selberg.
\newblock {\em Collected papers. {II}}.
\newblock Springer Collected Works in Mathematics. Springer, Heidelberg, 2014.
\newblock Reprint of the 1991 edition [MR1295844], With a foreword by K.
  Chandrasekharan.

\bibitem{Shokrollahi1999}
M.~A. Shokrollahi.
\newblock Relative class number of imaginary abelian fields of prime conductor
  below {$10\,000$}.
\newblock {\em Math. Comp.}, 68(228):1717--1728, 1999.

\bibitem{stark}
H.~M. Stark.
\newblock Some effective cases of the {B}rauer-{S}iegel theorem.
\newblock {\em Invent. Math.}, 23:135--152, 1974.

\bibitem{PARI2023}
{The PARI Group}, Bordeaux.
\newblock {\em {PARI/GP version \texttt{2.15.4}}}, 2023.
\newblock Available from \url{http://pari.math.u-bordeaux.fr}.

\bibitem{Topsoe2007}
F.~Tops{\o}e.
\newblock Some bounds for the logarithmic function.
\newblock In {\em Inequality theory and applications. {V}ol. 4}, pages
  137--151. Nova Sci. Publ., New York, 2007.

\bibitem{Wbook}
L.~C. Washington.
\newblock {\em {Introduction to Cyclotomic Fields}}, volume~83 of {\em Graduate
  Texts in Mathematics}.
\newblock Springer-Verlag, New York, second edition, 1997.

\bibitem{zhangyixi}
J.~H. Zhang, Y.~Yi, and P.~Xi.
\newblock On the products arising from the {K}ummer conjecture.
\newblock {\em Acta Math. Sin. (Engl. Ser.)}, 28(8):1677--1688, 2012.

\end{thebibliography}

\bigskip
\bigskip\noindent Neelam Kandhil  \par\noindent
{\footnotesize Max-Planck-Institut f\"ur Mathematik,
Vivatsgasse 7, D-53111 Bonn, Germany.\hfil\break
e-mail: {\tt kandhil@mpim-bonn.mpg.de}}

\medskip\noindent Alessandro Languasco \par\noindent
{\footnotesize Universit\`a di Padova, Dipartimento di Matematica ``Tullio Levi-Civita'', via Trieste 63, 35121 Padova, Italy.\hfil\break
e-mail: {\tt alessandro.languasco@unipd.it}; {\tt alessandro.languasco@gmail.com}}

\medskip\noindent Pieter Moree  \par\noindent
{\footnotesize Max-Planck-Institut f\"ur Mathematik,
Vivatsgasse 7, D-53111 Bonn, Germany.\hfil\break
e-mail: {\tt moree@mpim-bonn.mpg.de}}

\medskip\noindent Sumaia Saad Eddin \par\noindent
{\footnotesize Johann Radon Institute for Computational and Applied Mathematics,\\
Austrian Academy of Sciences,
Altenbergerstrasse 69, A-4040 Linz, Austria.\hfil\break
e-mail: {\tt sumaia.saad-eddin@ricam.oeaw.ac.at}}

\medskip\noindent Alisa Sedunova \par\noindent
{\footnotesize 
Mathematics Institute, Zeeman Building. University of Warwick, Coventry CV4 7AL \hfil\break
e-mail: {\tt alisa.sedunova@gmail.com}}
\end{document}